\def\C{{\mathcal{C}}}
\def\E{{\mathcal{E}}}
\def\N{\mathbb{N}}
\def\R{\mathbb{R}}
\def\emp{\emptyset}
\def\E{{\cal E}}
\def\N{{\cal N}}
\def\O{{\cal O}}
\def\ox{\overline{x}}
\def\oy{\overline{y}}
\def\oz{\overline{z}}
\def\cl{{\rm cl}\,}
\def\disp{\displaystyle}
\def\tto{\;{\lower 1pt\hbox{$\rightarrow$}}\kern-10pt
\hbox{\raise 2pt\hbox{$\rightarrow$}}\;}
\def\Hat{\widehat}
\def\Bar{\overline}
\def\ra{\rangle}
\def\la{\langle}
\def\R{\mathbb{R}}
\def\N{I\!\!N}
\def\ox{\bar{x}}
\def\oy{\bar{y}}
\def\oz{\bar{z}}
\def\gph{\mbox{\rm gph}\,}
\def\cl{{\rm cl}}
\def\substack#1#2{{\scriptstyle{#1}\atop\scriptstyle{#2}}}
\def\O{\Omega}
\def\ph{\varphi}
\def\emp{\emptyset}
\def\st{\stackrel}
\def\oR{\Bar{\R}}
\def\al{\alpha}
\def\N{I\!\!N}
\def\th{\theta}
\def\vt{\vartheta}
\def\beq{\begin{equation}}
\def\eeq{\end{equation}}
\def\sce{\setcounter{equation}{0}}
\begin{document}
\newtheorem{Theorem}{Theorem}[section]
\newtheorem{Corollary}[Theorem]{Corollary}
\newtheorem{Lemma}[Theorem]{Lemma}
\newtheorem{Proposition}[Theorem]{Proposition}
\newtheorem{Assumption}[Theorem]{Assumption}
\newtheorem{Definition}[Theorem]{Definition}
\newtheorem{exmp}{Example}[section]
\newtheorem{Fact}[Theorem]{Fact}
\newtheorem{Remark}[Theorem]{Remark}
\numberwithin{equation}{section}
\allowdisplaybreaks
\begin{center}
{\bf SUBDIFFERENTIALS OF NONCONVEX INTEGRAL FUNCTIONALS IN BANACH SPACES\\ WITH APPLICATIONS TO\\ STOCHASTIC DYNAMIC PROGRAMMING}\\[3ex]
BORIS S. MORDUKHOVICH\footnote{Department of Mathematics, Wayne State University, Detroit, Michigan 48202, USA (boris@math.wayne.edu). Research of this author was partly supported by the USA National Science Foundation under grants DMS-1007132 and DMS-1512846 and by the USA Air Force Office of Scientific Research under grant No.\,15RT0462.} and NOBUSUMI SAGARA\footnote{Department of Economics, Hosei University, Tokyo 194-0298, Japan (nsagara@hosei.ac.jp). Research of this author was partly supported by JSPS KAKENHI Grant Number 26380246 from the Ministry of Education, Culture, Sports, Science and Technology, Japan.}\\[2ex]
{\bf Dedicated to Nino Maugeri with great respect}
\end{center}

\small{\bf Abstract.} The paper concerns the investigation of nonconvex and nondifferentiable integral functionals on general Banach spaces, which may not be reflexive and/or separable. Considering two major subdifferentials of variational analysis, we derive nonsmooth versions of the Leibniz rule on subdifferentiation under the integral sign, where the integral of the subdifferential set-valued mappings generated by Lipschitzian integrands is understood in the Gelfand sense. Besides examining integration over complete measure spaces and also over those with nonatomic measures, our special attention is drawn to a stronger version of measure nonatomicity, known as saturation, to invoke the recent results of the Lyapunov convexity theorem type for the Gelfand integral of the subdifferential mappings. The main results are applied to the subdifferential study of the optimal value functions and deriving the corresponding necessary optimality conditions in nonconvex problems of stochastic dynamic programming with discrete time on the infinite horizon.\vspace*{0.05in}

\textbf{Key Words:} integral functionals, subdifferential mappings, Gelfand integral, generalized Leibniz formulas, saturated measure spaces, Lyapunov convexity theorem, stochastic dynamic programming, optimal value functions\vspace*{0.05in}

{\bf AMS subject classifications.} Primary: 49J52, 28B05, 28B20; Secondary: 90C40, 90C46, 90C56

\normalsize

\section{Introduction}\sce

This present study belongs to the area of infinite-dimensional variational analysis and its applications to which Professor Antonino Maugeri has made well-recognized contributions; see, e.g., \cite{mau01} and the references therein.

The main object of our investigation in this paper is the class of {\em integral functionals} of the type
\begin{eqnarray}\label{int-f}
I_\varphi(x):=\int\varphi(x,\omega)d\mu,\quad x\in X,
\end{eqnarray}
generated by some integrand $\ph\colon X\times\O\to\oR:=(-\infty,\infty]$, which is defined on a Banach space $X$ and a {\em finite measure} space $\O$. Our major goal is to establish new results of {\em subdifferential calculus} for \eqref{int-f} related to {\em nonconvex subdifferential} versions of the classical {\em Leibniz rule} on differentiation under the integral sign. Subdifferentiation of integral functionals of type \eqref{int-f} and appropriate versions of the Leibniz rule were developed in \cite{il72,ro71,va75} for convex integrands and then partially extended in \cite{ch09,cl83,gi08,jt11,mo06,pe11} to some nonconvex settings. In particular, the inclusion
\begin{eqnarray}\label{lem}
\partial\Big(\int^{1}_{0}\ph(\cdot,t)\,dt\Big)(\ox)\subset\cl\int^{1}_0\partial\ph(\ox,t)\,dt
\end{eqnarray}
was established in \cite[Lemma~6.18]{mo06} for the basic/limiting subdifferential by Mordukhovich \cite{mo06a}, where $\ph\colon X\times[0,1]\to\R$ is Lebesgue measurable in $t$ and locally Lipschitzian in $x$ around $\ox$ with a summable modulus on $[0,1]$, where $X$ is reflexive and separable, where the closure ``cl" is taken in the norm topology of $X^*$, and where the integral is understood in the Bochner sense. The obtained formula \eqref{lem} was applied in \cite[Chapter~6]{mo06} to the derivation of necessary optimality conditions of the extended Euler-Lagrange type for generalized Bolza problems governed by evolution inclusions.\vspace*{0.03in}

The original motivation for the current paper was to develop appropriate versions of \eqref{lem} in order to achieve the following goals:

{\bf(a)} Get an extension of \eqref{lem} to more {\em general classes} of {\em measure spaces}.

{\bf(b)} Obtain conditions allowing us to {\em dismiss the closure operation} in the suitable counterparts of \eqref{lem}.

{\bf(c)} {\em Avoid or relax} the {\em reflexivity and separability} assumptions on the Banach space $X$ for the validity of \eqref{lem} and its extensions.\vspace*{0.03in}

In the rest of this paper the reader can find a resolution of the issues listed in (a)--(c) together with additional results in these directions. The crucial ingredients of the progress achieved include the following:\vspace*{0.03in}

{\bf(i)} The usage of the {\em Gelfand integral} instead of the Bochner one in the case of mappings on complete measure spaces with values in a space, which is topologically {\em dual} to an {\em arbitrary Banach} space. Note that both these integrals agree in the setting of \cite[Lemma~6.18]{mo06}; in fact, in any separable Asplund space $X$. Our achievements in this direction are based on a quite recent progress in studying the Gelfand integrand of set-valued mappings made by Cascales et al. in \cite{ckr10,ckr11}; see Section~2 for more details.

{\bf(ii)} The usage of the measure {\em nonatomicity} and the {\em Lyapunov convexity theorem}, which allows us to derive an extension of \eqref{lem}  to general measure spaces $\O$ enjoying this property and (nonreflexive and nonseparable) {\em Asplund spaces} $X$ with the replacing the norm closure ``cl" on the right-hand side of \eqref{lem} by the weak$^*$ closure; the latter in not needed if $X$ is reflexive.

{\bf(iii)} The usage of {\em saturation} (or ``super-atomlessness") property of measure spaces, which is equivalent to the enhanced version of the Lyapunov convexity theorem for the Gelfand integral of set-valued mapping {\em without the closure operation} on the right-hand side of \eqref{lem}; see \cite{ks15a,po08,sy08}.\vspace*{0.03in}

It is worth noting that the aforementioned results obtained in the vein of \eqref{lem} are new even in the case of the Bochner integral in reflexive and separable Banach spaces $X$. Observe also that, in the process of implementing our approach, we establish similar inclusions for the Gelfand integral (more precisely, its $w^*$-integral version for convex-valued multifunctions) of the {\em generalized gradient} by Clarke \cite{cl83}, which is the convexification of the limiting subdifferential in the case of Asplund spaces. Moreover, it is shown below that the Gelfand integral counterpart of \eqref{lem} holds for the generalized gradient in any Banach space with omitting the closure operation on the right-hand side in the case of an arbitrary complete measure space $\O$, even without imposing the saturation requirement on the measure.

The obtained subdifferential formulas for integral functionals are further applied to problems of {\em stochastic dynamic programming} (DP) for infinite-dimensional discrete-time systems on the infinite horizon, which are important, e.g., in macroeconomic modeling. We first derive the Bellman equation for the intrinsically nonsmooth random {\em optimal value function} in such systems, then justify its local Lipschitz continuity under natural assumptions and evaluate its subdifferentials. These results readily imply necessary optimality conditions in the stochastic dynamic programs under consideration.\vspace*{0.03in}

The rest of the paper is organized as follows. Section~2 is devoted to the study of the Gelfand integral and its $w^*$-integral modification for set-valued mappings/multifunctions with values in duals to Banach spaces. We provide a brief overview of some basic facts needed in the sequel and obtain a new result on the convexity of the Gelfand integral of set-valued mappings defined on nonatomic measure spaces with no separability assumption on the Banach space in question.

Section~3 contains our major results on subdifferentiation of integral functionals of type \eqref{int-f}. First we derive a generalized gradient version of the Leibniz formula in the case of integrands $\ph(x,\omega)$ defined on the product of an arbitrary Banach space and a complete measure space, provided that the generalized gradient of $\ph$ in $x$ is $w^*$-scalarly measurable. Effective sufficient conditions for the latter properties are discussed in detail. Then we proceed with several versions of the Leibniz rule in the case of Gelfand integral functionals associate with the limiting subdifferential. In this way we resolve the issues discussed above in (a)--(c).

The last two sections concern applications of the obtained subdifferential results for integral functionals to problems of stochastic DP in Banach spaces. In Section~4 we describe a general stochastic DP problem for discrete-time systems on the infinite horizon and show that its optimal value function is a unique solution to the corresponding Bellman equation in an integral form with respect to a transition probability measure induced by the Markov decision process. Proving the local Lipschitz continuity of the value function under appropriate assumptions, we employ the results of Section~3 to evaluate the corresponding subdifferentials of the value function and derive necessary optimality conditions in the stochastic DP problem.\vspace*{0.03in}

Throughout the paper we use the standard terminology and notation recalled in the places they first appear in the text below.

\section{Gelfand Integrals of Set-Valued Mappings}

We start this section by recalling the notion of the $w^*$-integral for a function on a measurable space with values in dual Banach spaces. Let $(\Omega,\Sigma,\mu)$ be a complete finite measure space, and let $X$ be a real Banach space with the dual system $\langle X,X^*\rangle$, where $X^*$ is the norm/topological dual of $X$, while $w^*$ indicates the usage of the weak$^*$ topology on $X^*$ in what follows. A function $f:\Omega\to X^*$ is \textit{$\mathit{w}^*$-\hspace{0pt}scalarly measurable} if the scalar function $\langle f(\cdot),x\rangle$ is measurable for every $x\in X$. We say that a $\mathit{w}^*$-scalarly measurable function is \textit{$\mathit{w}^*$-scalarly integrable} if $\langle f(\cdot),x \rangle$ is integrable for every $x\in X$. Further, we say that a $\mathit{w}^*$-\hspace{0pt}scalarly measurable function $f$ is {\em $w^*$-integrable} (or {\em Gelfand integrable}) over a given set $A\in\Sigma$ if there exists $x^*_A\in X^*$ such that $\langle x^*_A,x \rangle=\int_A\langle f(\omega),x \rangle d\mu$ for every $x\in X$. The element $x^*_A$, which is unique by the separation theorem, is called the {\em Gelfand} or {\rm $w^*$-integral} of $f$ over $A$ and is denoted by $\int_Afd\mu$ . If $A=\O$, we omit indicating the set in the integral sign. Note that every $\mathit{w}^*$-scalarly integrable function is $\mathit{w}^*$-integrable as shown in \cite[Theorem 11.52]{ab06}. While we do not distinguish between the Gelfand integral and the $w^*$-integral of single-valued mappings, we do it for set-valued mappings/multifunctions; see below.

\begin{Definition}[\bf Gelfand integral of general set-valued mappings]\label{gel}Let $(\Omega,\Sigma,\mu)$ be a complete finite measure space, and let $X$ be a Banach space. Given a set-valued mapping $\Gamma\colon\O\rightrightarrows X^*$, we denote by ${\cal S}^1_\Gamma$ the collection of all the ${w}^*$-\hspace{0pt}integrable selectors of $\Gamma$ on $\O$, i.e., such $w^*$-integrable functions $f\colon\O\to X^*$ that $f(\omega)\in\Gamma(\omega)$ for a.e.\ $\omega\in\O$. Then the {\sc Gelfand integral} of the multifunction $\Gamma$ over the measure space $\O$ is defined by
\begin{equation}\label{eq0}
I(\Gamma)=\int\Gamma\,d\mu:=\left\{\int fd\mu\mid f\in\mathcal{S}^1_\Gamma\right\}.
\end{equation}
\end{Definition}

The first question arising in the study of the Gelfand integral \eqref{eq0} is about {\em nonemptiness} of the set $I(\Gamma)$. The answer is affirmative in a rather broad setting due to a nice selection theorem from \cite{ckr11}. Although our standing framework in this paper is the general class of Banach spaces, this result and some other ones require the Asplund property of the space in question. Recall that a Banach space $X$ is {\em Asplund} if every convex continuous function $\ph\colon U\to\R$  defined on an open convex set $U\subset X$ is Fr\'echet differentiable on a dense subset of $U$. This class of Banach spaces is sufficiently large including, in particular, any space with a Fr\'echet differentiable bump function (hence any space admitting an equivalent norm Fr\'echet differentiable off the origin, i.e., a {\em Fr\'echet smooth renorm}, and therefore every reflexive space), any space with a separable dual, and any space $X$ whose dual $X^*$ is {\em weakly compactly generated} meaning that there exists a weakly compact subset of $X^*$ whose linear span in norm sense. There are many useful characterizations of Asplund spaces; among the most remarkable ones we mention that $X$ is Asplund if and only if every closed separable subspace of $X^*$ has a separable dual. It is relevant to mention that any separable Asplund space admits a Fr\'echet smooth renorm. We refer the reader to the book \cite{ph93} and the bibliography therein for all these and other facts on Asplund spaces.

An appropriate definition of measurability for set-valued mappings is needed to be recalled in order to list conditions ensuring the nonemptiness of the Gelfand integral \eqref{eq0}. Denoting by
\begin{eqnarray}\label{supp}
s(x,C):=\sup_{x^*\in C}\langle x^*,x \rangle,\quad x\in X,
\end{eqnarray}
the {\em support function} of a nonempty set $C\subset X$, we say that a set-valued mapping $\Gamma\colon\O\rightrightarrows X^*$ is $\mathit{w}^*$-\hspace{0pt}\textit{scalarly measurable} if its support function $s(x,\Gamma)\colon\Omega\to\oR$ is measurable for every $x\in X$. Further, a multifunction $\Gamma$ is said to be \textit{integrably bounded} if there exists $\psi\in L^1(\mu)$ such that $\sup_{x^*\in\Gamma(\omega)}\|x^*\|\le\psi(\omega)$ for every $\omega\in\Omega$. Now we are ready to formulate the aforementioned result on nonemptiness of the Gelfand integral \eqref{eq0}.

\begin{Proposition}[\bf nonemptiness of the Gelfand integral]\label{thm:ckr2} Let $(\Omega,\Sigma,\mu)$ be a complete finite measure space, and let $X$ be an Asplund space. Assume that $\Gamma\colon\O\rightrightarrows X^*$ is an integrably bounded and $\mathit{w}^*$-\hspace{0pt}scalarly measurable multifunction with $\mathit{w}^*$-\hspace{0pt}closed values. Then the Gelfand integral \eqref{eq0} of the mapping $\Gamma$ is a nonempty subset of $X^*$.
\end{Proposition}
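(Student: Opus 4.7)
The plan is to produce a single $w^*$-integrable selector $f\in\mathcal{S}^1_\Gamma$; then its Gelfand integral $\int f\,d\mu$ witnesses nonemptiness of $I(\Gamma)$. So the task splits into two subtasks: (i) produce a $w^*$-scalarly measurable selector $f\colon\O\to X^*$ with $f(\omega)\in\Gamma(\omega)$ a.e., and (ii) upgrade integrable boundedness of $\Gamma$ to $w^*$-integrability of $f$.

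For (i), I would invoke the measurable selection theorem of Cascales, Kadets and Rodr\'\i guez \cite{ckr11}, whose scope matches the hypotheses of the proposition exactly: for a multifunction from a complete finite measure space into the dual of an Asplund space, $w^*$-scalar measurability together with $w^*$-closed (and nonempty) values produces a $w^*$-scalarly measurable selector. The Asplund assumption is exactly what makes such a selection available; this is the main obstacle, and it is resolved by quoting \cite{ckr11} rather than constructing the selector by hand. (The values are nonempty since $s(x,\Gamma(\omega))<\infty$ by integrable boundedness, and $w^*$-closedness is assumed.)

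For (ii), let $\psi\in L^1(\mu)$ be the integrable bound, so $\|f(\omega)\|\le\psi(\omega)$ for a.e.\ $\omega\in\O$. For every $x\in X$ the scalar function $\omega\mapsto\la f(\omega),x\ra$ is measurable by the selector property in (i) and satisfies
\begin{equation*}
|\la f(\omega),x\ra|\le\|x\|\,\psi(\omega)\quad\text{for a.e. }\omega\in\O,
\end{equation*}
so $\la f(\cdot),x\ra\in L^1(\mu)$. Hence $f$ is $w^*$-scalarly integrable.

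Finally, applying \cite[Theorem~11.52]{ab06}, recalled in the paragraph preceding Definition~\ref{gel}, every $w^*$-scalarly integrable function is $w^*$-integrable in the Gelfand sense; that is, there is $x_\O^*\in X^*$ with $\la x_\O^*,x\ra=\int\la f(\omega),x\ra\,d\mu$ for every $x\in X$. Thus $f\in\mathcal{S}^1_\Gamma$ and $\int f\,d\mu=x_\O^*\in I(\Gamma)$, proving $I(\Gamma)\neq\emp$. The whole argument is short once the deep selection result of \cite{ckr11} is in hand; the remaining steps are routine estimates and a citation of the standard Gelfand-integrability criterion.
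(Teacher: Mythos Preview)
Your proposal is correct and follows essentially the same route as the paper's own proof: invoke the measurable selection result of \cite{ckr11} (the paper cites specifically \cite[Corollary~3.11]{ckr11}) to obtain a $w^*$-scalarly measurable selector, then use integrable boundedness to conclude that this selector is $w^*$-integrable, hence lies in $\mathcal{S}^1_\Gamma$. The paper additionally remarks that the values of $\Gamma$ are in fact $w^*$-compact (being $w^*$-closed and bounded, by Banach--Alaoglu), which is the precise hypothesis under which \cite[Corollary~3.11]{ckr11} is stated; you may wish to make this explicit, but otherwise your argument matches the paper's.
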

\begin{proof}
Since $\Gamma$ has $\mathit{w}^*$-compact values and its integral boundedness guarantees the $\mathit{w}^*$-integrability of $\mathit{w}^*$-\hspace{0pt}scalarly measurable selectors, the nonemptiness of the Gelfand integral follows from definition \eqref{eq0} and the measurable selection result from \cite[Corollary~3.11]{ckr11}, which ensures the existence of $\mathit{w}^*$-\hspace{0pt}scalarly measurable selectors of $\Gamma$ under the assumptions made.
\end{proof}

At this stage, it makes sense to compare the Gelfand and Bochner integrals of single-valued and set-valued mappings.

\begin{Remark}[\bf comparison with the Bochner integral]\label{rem1} {\rm Denote by $\mathrm{Borel}(X^*,\mathit{w}^*)$ the Borel $\sigma$-\hspace{0pt}algebra of $X^*$ generated by the weak$^*$ topology and by $\mathrm{Borel}(X^*,\|\cdot\|)$ the Borel $\sigma$-\hspace{0pt}algebra of $X^*$ generated by the dual norm. Recall that a function $f:\Omega\to X^*$ is \textit{strongly} (or \textit{Bochner}) \textit{measurable} if it is a pointwise $X^*$-norm limit of a sequence of $X^*$-\hspace{0pt}valued simple functions. It is well known that if $X$ is a separable Banach space, then $X^*$ is a locally convex Suslin space under the weak$^*$ topology and $f$ is $\mathit{w}^*$-\hspace{0pt}scalarly measurable if and only if it is $\mathrm{Borel}(X^*,\mathit{w}^*)$-\hspace{0pt}measurable; see \cite[Theorem~1]{th75} or \cite[Lemma 2]{va75}. Furthermore, if $X^*$ is separable, then $\mathrm{Borel}(X^*,\mathit{w}^*)$ agrees with $\mathrm{Borel}(X^*,\|\cdot\|)$ by \cite[Corollary~2 from Part~I, Ch.\,II]{sc73}. It implies, by taking into account that equality, the strong, $\mathrm{Borel}(X^*,\mathit{w}^*)$-\hspace{0pt}, and $\mathrm{Borel}(X^*,\|\cdot\|)$-\hspace{0pt}measurabilities coincide. Combining these facts allows us to deduce that a Gelfand integrable function $f$ satisfying $\int\|f(\omega)\|d\mu<\infty$ is also Bochner integrable if $X$ is Asplund and separable. Therefore, defining similarly to \eqref{eq0} the Bochner integral of a set-valued mapping $\Gamma\colon\O\rightrightarrows X^*$, we conclude that the {\em Gelfand and Bochner integrals} of $\Gamma$ {\em agree in separable Asplund spaces}.}
\end{Remark}

To proceed with studying the integral \eqref{eq0} of multifunctions $\Gamma\colon\O\rightrightarrows X^*$, recall that $\Gamma$ is \textit{$\mathit{w}^*$-\hspace{0pt}scalarly integrable} if its support function $s(x,\Gamma)$ is integrable for every $x\in X$. The next proposition is useful in what follows.

\begin{Proposition} [\bf supremum representation for the Gelfand integral]\label{lem04} Let $X$ be an Asplund space, $(\Omega,\Sigma,\mu)$ be a complete finite measure space, and $\Gamma\colon\Omega\rightrightarrows X^*$ be a Gelfand integrable, $\mathit{w}^*$-\hspace{0pt}scalarly integrable multifunction with $\mathit{w}^*$-\hspace{0pt}compact values. Then we have
\begin{eqnarray}\label{eq-sup}
\int s(x,\Gamma)d\mu=\sup_{f\in\mathcal{S}_\Gamma^1}\int\langle f(\omega),x \rangle d\mu\quad\text{for every $x\in X$}.
\end{eqnarray}
\end{Proposition}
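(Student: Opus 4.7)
The plan is to handle the two directions of \eqref{eq-sup} separately. The inequality $\ge$ is immediate: for every selector $f\in\mathcal{S}^1_\Gamma$ the definition of the support function \eqref{supp} gives $\langle f(\omega),x\rangle\le s(x,\Gamma(\omega))$ for a.e.\ $\omega$, and integrating then taking the supremum over $f\in\mathcal{S}^1_\Gamma$ yields this half of the equality. The heart of the matter is the reverse inequality, for which the plan is to produce, for each $N\in\N$, a selector $g_N\in\mathcal{S}^1_\Gamma$ whose scalarization $\langle g_N(\omega),x\rangle$ increases to $s(x,\Gamma(\omega))$ almost everywhere.

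First I would invoke the measurable selection technology of \cite{ckr10,ckr11} to extract a Castaing-type representation of $\Gamma$, namely a countable family $\{f_n\}_{n\ge 1}\subset\mathcal{S}^1_\Gamma$ whose pointwise values are $w^*$-dense in $\Gamma(\omega)$ for a.e.\ $\omega$. Because $\langle\cdot,x\rangle$ is $w^*$-continuous and $\Gamma(\omega)$ is $w^*$-compact, such density forces $s(x,\Gamma(\omega))=\sup_n\langle f_n(\omega),x\rangle$ almost everywhere. Then for each $N$ I would set $h_N(\omega):=\max_{1\le n\le N}\langle f_n(\omega),x\rangle$, partition $\Omega$ into measurable pieces $A^N_n$ according to the smallest index $n\le N$ that realizes $h_N(\omega)$, and define $g_N:=\sum_{n=1}^N f_n\cdot\mathbf{1}_{A^N_n}$. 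Each $g_N$ is then a $w^*$-scalarly measurable selector of $\Gamma$ with $\langle g_N(\omega),x\rangle=h_N(\omega)\uparrow s(x,\Gamma(\omega))$ a.e.

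To finish, I would verify that $g_N\in\mathcal{S}^1_\Gamma$ and pass to the limit. For each $y\in X$ the bound $|\langle g_N(\omega),y\rangle|\le\max\{s(y,\Gamma(\omega)),s(-y,\Gamma(\omega))\}$ is integrable by the $w^*$-scalar integrability of $\Gamma$, so $g_N$ is $w^*$-scalarly integrable and hence Gelfand integrable by \cite[Theorem~11.52]{ab06}; thus $g_N\in\mathcal{S}^1_\Gamma$. Dominated convergence with the integrable envelope $|s(x,\Gamma(\cdot))|+|s(-x,\Gamma(\cdot))|$ then gives $\int\langle g_N(\omega),x\rangle d\mu\to\int s(x,\Gamma)d\mu$, and taking the supremum over $\mathcal{S}^1_\Gamma$ closes the argument. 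The main technical obstacle is producing the Castaing representation: while \cite{ckr11} supplies a single $w^*$-scalarly measurable selector, upgrading this to a countable family with $w^*$-dense values requires iterating the selection theorem over restrictions of $\Gamma$ by a countable family of $w^*$-closed half-spaces (or norm-balls centered at points of a countable set) and checking that each trimmed multifunction retains $w^*$-scalar measurability together with $w^*$-compact values.
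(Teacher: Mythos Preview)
Your handling of the inequality $\ge$ is fine and matches the paper's. The difficulty is in the converse, and here your plan has a genuine gap: the Castaing-type representation you rely on need not exist under the stated hypotheses. The proposition assumes only that $X$ is Asplund, not that $X$ (or $X^*$) is separable. In that generality a $w^*$-compact set $\Gamma(\omega)\subset X^*$ may fail to be $w^*$-separable, so no countable family $\{f_n(\omega)\}$ can be $w^*$-dense in $\Gamma(\omega)$. For a concrete obstruction take $X=c_0(I)$ with $I$ uncountable (an Asplund space) and $\Gamma(\omega)\equiv B_{\ell_1(I)}$: any countable subset of $\ell_1(I)$ is supported on a countable set $J\subset I$, and the unit vectors $e_i$ with $i\notin J$ are then at $w^*$-distance $1$ from the whole family (test against $e_i\in c_0(I)$). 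Your proposed construction---intersecting $\Gamma$ with countably many $w^*$-closed half-spaces or norm-balls---presupposes exactly the separability that is missing, so it cannot produce pointwise $w^*$-density.

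The paper avoids this entirely by fixing $x\in X$ \emph{first} and passing to the argmax multifunction
\[
\Gamma_x(\omega):=\bigl\{x^*\in\Gamma(\omega)\;\big|\;\langle x^*,x\rangle=s(x,\Gamma(\omega))\bigr\},
\]
which is nonempty and $w^*$-compact by the $w^*$-compactness of $\Gamma(\omega)$. By \cite[Remark~4.4]{ckr11} this $\Gamma_x$ is again $w^*$-scalarly measurable, so the selection theorem \cite[Corollary~3.11]{ckr11} yields a single selector $f_x\in\mathcal{S}^1_\Gamma$ with $\langle f_x(\omega),x\rangle=s(x,\Gamma(\omega))$ a.e. Integrating gives $\int s(x,\Gamma)\,d\mu=\int\langle f_x(\omega),x\rangle\,d\mu$, so the supremum in \eqref{eq-sup} is actually \emph{attained}; no approximation, patching, or dominated-convergence step is needed. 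If you want to salvage your scheme, the minimal repair is to drop the global Castaing representation and instead, for the fixed $x$, select $f_n$ from $\{x^*\in\Gamma(\omega):\langle x^*,x\rangle\ge s(x,\Gamma(\omega))-1/n\}$; but once you see that this trimmed multifunction is tractable, taking $n=\infty$ as the paper does is clearly the cleaner move.
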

\begin{proof}
For any $x\in X$ we define the multifunction $\Gamma_x\colon\Omega\rightrightarrows X^*$ with $\mathit{w}^*$-\hspace{0pt}compact values in $X^*$ by
$$
\Gamma_x(\omega):=\{x^*\in\Gamma(\omega)\mid\langle x^*,x \rangle=s(x,\Gamma(\omega))\}.
$$
As observed in \cite[Remark~4.4]{ckr11}, $\Gamma_x$ is $\mathit{w}^*$-\hspace{0pt}scalarly measurable, and hence it admits a $\mathit{w}^*$-\hspace{0pt}scalarly measurable selector $f_x$ by \cite[Corollary~3.11]{ckr11}. Thus
$$
s(x,\Gamma(\omega))=\langle f_x(\omega),x\rangle\;\mbox{ for a.e. }\;\omega\in\Omega.
$$
Integrating both sides of the above equality tells us that
$$
\int s(x,\Gamma)d\mu=\int\langle f_x(\omega),x \rangle d\mu\le\sup_{f\in\mathcal{S}_\Gamma^1}\int\langle f(\omega),x \rangle d\mu.
$$
On the other hand, it follows from \eqref{supp} that $s(x,\Gamma(\omega))\ge\langle x,f(\omega)\rangle$ for every $f\in\mathcal{S}_\Gamma^1$ a.e.\ $\omega\in\Omega$. Integrating both sides of this inequality yields
$$
\int s(x,\Gamma)d\mu\ge\sup_{f\in\mathcal{S}_\Gamma^1}\int\langle f(\omega),x \rangle d\mu,
$$
which readily verifies the claimed equality \eqref{eq-sup}.
\end{proof}

Next we describe an {\em alternative approach} to constructing a Gelfand-type integral of {\em convex}-valued multifunctions, which agrees with the Gelfand integral from Definition~\ref{gel} in the case of separable spaces $X$ while it may be a bit different from \eqref{eq0} in the absence of separability. To proceed, denote by $\overline{\mathrm{co}}^{\mathit{\,w}^*}\Gamma$ the multifunction defined by taking the {\em $\mathit{w}^*$-\hspace{0pt}closure} of the {\em convex hull} of $\Gamma(\omega)$ for every $\omega\in \Omega$. It follows from \eqref{supp} that $s(x,\Gamma(\omega))=s(x,\overline{\mathrm{co}}^{\mathit{\,w}^*}\Gamma(\omega))$ for every $x\in X$ and $\omega\in \Omega$, and that $\Gamma$ is $\mathit{w}^*$-\hspace{0pt}scalarly measurable (resp.\ $\mathit{w}^*$-\hspace{0pt}scalarly integrable) if and only if so is $\overline{\mathrm{co}}^{\mathit{\,w}^*}\Gamma$. We say that $f:\Omega\to X^*$ is a \textit{$\mathit{w}^*$-\hspace{0pt}almost selector} of $\Gamma$ if, for every $x\in X$
$$
\langle f(\omega),x \rangle\le s(x,\Gamma(\omega))\;\mbox{ for a.e.}\;\omega\in\Omega,
$$
where the corresponding $\mu$-\hspace{0pt}null set may depend on $x$. Similarly to the set ${\cal S}^1_\Gamma$ in Definition~\ref{gel}, denote by
$\mathcal{S}^{1,\mathit{w}^*}_\Gamma$ the collection of $\mathit{w}^*$-integrable, $\mathit{w}^*$-\hspace{0pt}almost selectors of a multifunction $\Gamma:\Omega\rightrightarrows X^*$. We clearly have the inclusion $\mathcal{S}^1_\Gamma\subset\mathcal{S}^{1,\mathit{w}^*}_\Gamma$. If furthermore $X$ is separable and $\Gamma$ has $\mathit{w}^*$-\hspace{0pt}compact and convex values, then $\mathcal{S}^1_\Gamma=\mathcal{S}^{1,\mathit{w}^*}_\Gamma$ as shown in \cite[Proposition 2.3]{ckr11}.

\begin{Definition}[\bf $w^*$-integral of convex-valued multifunctions]\label{gel-convex} Let $(\Omega,\Sigma,\mu)$ be a complete finite measure space, and let $X$ be Banach. We say that a $\mathit{w}^*$-\hspace{0pt}scalarly measurable multifunction $\Gamma\colon\Omega\rightrightarrows X^*$ with $\mathit{w}^*$-\hspace{0pt}compact and convex values is {\sc $w^*$-integrable} if it is $\mathit{w}^*$-scalarly integrable and for every $A\in\Sigma$ there is a $\mathit{w}^*$-\hspace{0pt}compact and convex set $C_A\subset X^*$ such that
$$
s(x,C_A)=\int_A s(x,\Gamma)d\mu\quad\text{for every $x\in X$}.
$$
Then the set $C_A$ is called the {\sc $w^*$-integral} of $\Gamma$ over $A$ and denoted by $w^*$-$\int_A\Gamma d\mu$, where the set indication is omitted when $A=\O$.
\end{Definition}

The classical separation theorem tells us that the $w^*$-integral $w^*$-$\int_A\Gamma d\mu$ is uniquely determined for every $A\in\Sigma$. Some useful facts on $w^*$-integrability of multifunctions are summarized in the next proposition taken from \cite[Theorem~2.7 and Theorem~4.5]{ckr11} and from \cite[Theorem 6.7]{mu13}.

\begin{Proposition} [\bf properties of $w^*$-integrable convex-valued multifunctions]\label{fac} In the setting of Definition~{\rm\ref{gel-convex}} the following hold:

{\bf(i)} Every $\mathit{w}^*$-\hspace{0pt}scalarly measurable multifunction with bounded values admits
a $\mathit{w}^*$-\hspace{0pt}scalarly measurable $\mathit{w}^*$-\hspace{0pt}almost selector.

{\bf(ii)} Every $\mathit{w}^*$-scalarly integrable multifunction with $\mathit{w}^*$-compact and convex values is $\mathit{w}^*$-integrable.

{\bf(iii)} A $\mathit{w}^*$-\hspace{0pt}scalarly measurable multifunction $\Gamma\colon\Omega\rightrightarrows X^*$ with $\mathit{w}^*$-\hspace{0pt}compact and convex values is $w^*$-integrable if and only if every $\mathit{w}^*$-\hspace{0pt}scalarly measurable, $\mathit{w}^*$-\hspace{0pt}almost selector of $\Gamma$ is $w^*$-integrable. In this case we have that the $w^*$-integral of $\Gamma$ is given by
\begin{eqnarray}\label{eq1}
w^*\mbox{-}\int_A\Gamma d\mu=\left\{\int_Af d\mu\mid f\in\mathcal{S}^{1,\mathit{w}^*}_\Gamma\right\}\quad\text{for every $A\in\Sigma$}.
\end{eqnarray}
\end{Proposition}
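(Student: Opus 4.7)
The plan is to treat the three assertions in order, with the \emph{support function} $s(x,\Gamma(\cdot))$ serving as the main bridge between $w^*$-scalar properties and the geometric $w^*$-integral, and with H\"{o}rmander's theorem (a sublinear functional on $X$ is the support function of a unique $w^*$-compact convex subset of $X^*$ iff it is continuous) providing the existence statement in (ii).

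For (i), I would first replace $\Gamma$ by its $w^*$-closed convex hull $\overline{\mathrm{co}}^{\,w^*}\Gamma$, which is $w^*$-compact convex-valued, bounded, and has the same support function (hence is still $w^*$-scalarly measurable). Any $w^*$-scalarly measurable selector of $\overline{\mathrm{co}}^{\,w^*}\Gamma$ is automatically a $w^*$-almost selector of $\Gamma$ because $\langle f(\omega),x\rangle\le s(x,\overline{\mathrm{co}}^{\,w^*}\Gamma(\omega))=s(x,\Gamma(\omega))$ pointwise. Existence of a $w^*$-scalarly measurable selector of the convex-hull multifunction would then follow from the selection machinery already invoked in Proposition~\ref{thm:ckr2} (\cite[Corollary~3.11]{ckr11}), adapted to drop integrable boundedness in favor of pointwise boundedness (which is what is needed to make a bounded-valued analogue of the measurable-selection argument work).

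For (ii), fix $A\in\Sigma$ and define the functional
\[
p_A(x):=\int_A s(x,\Gamma(\omega))\,d\mu,\qquad x\in X.
\]
Positive homogeneity and subadditivity of $s(\cdot,\Gamma(\omega))$ pass under the integral, so $p_A$ is sublinear and, by $w^*$-scalar integrability, finite-valued on all of $X$. The key analytic step is to upgrade this to \emph{continuity} of $p_A$; because $p_A$ is sublinear and finite on a Banach space, a Baire/Banach--Steinhaus argument applied to the family of continuous sublinear functions $\{s(\cdot,\Gamma(\omega))\}_{\omega\in\Omega}$ (whose integrals are pointwise finite) yields continuity of $p_A$ on $X$. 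Hörmander's theorem then produces a unique $w^*$-compact convex set $C_A\subset X^*$ with $s(x,C_A)=p_A(x)$ for every $x\in X$, which is precisely $w^*$-$\int_A\Gamma\,d\mu$.

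For (iii), the ``only if'' direction is routine: given a $w^*$-scalarly measurable $w^*$-almost selector $f$ of $\Gamma$, the estimate $\langle f(\omega),x\rangle\le s(x,\Gamma(\omega))$ combined with $w^*$-integrability of $\Gamma$ shows that $\int_A\langle f,x\rangle\,d\mu$ is finite and sublinearly dominated, giving $w^*$-integrability of $f$. For ``if'', one notes that by (i) the collection $\mathcal{S}^{1,w^*}_\Gamma$ is nonempty. To establish the representation \eqref{eq1}, I would verify that both the right-hand side and $C_A$ share the same support function $p_A$: the inequality $s(x,\text{RHS})\le p_A(x)$ is immediate from the almost-selector inequality, while the reverse inequality uses an ``extremal'' $w^*$-almost selector $f_x$ with $\langle f_x(\omega),x\rangle=s(x,\Gamma(\omega))$ a.e., constructed as in the proof of Proposition~\ref{lem04}; then the Hahn--Banach separation theorem forces the two $w^*$-compact convex sets to coincide. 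The main obstacle throughout is the non-separability of $X$: the classical measurable selection and scalar-vs-Borel equivalences described in Remark~\ref{rem1} break down, so one must rely on the more delicate machinery of \cite{ckr11,mu13} to produce $w^*$-scalarly measurable selectors and to justify the passage from finiteness to continuity of $p_A$ in a general Asplund (or even general Banach) setting.
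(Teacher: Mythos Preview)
The paper does not give its own proof of this proposition: it is stated as a summary of results ``taken from \cite[Theorem~2.7 and Theorem~4.5]{ckr11} and from \cite[Theorem~6.7]{mu13}.'' So there is no argument in the paper to compare your sketch against; you are in effect trying to reconstruct the cited proofs.

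That said, your sketch has a genuine gap in part (i). You invoke the selection result \cite[Corollary~3.11]{ckr11} (via Proposition~\ref{thm:ckr2}) to obtain a $w^*$-scalarly measurable selector of $\overline{\mathrm{co}}^{\,w^*}\Gamma$, but that corollary---and Proposition~\ref{thm:ckr2} itself---requires $X$ to be \emph{Asplund}, whereas Proposition~\ref{fac} is stated in the setting of Definition~\ref{gel-convex}, where $X$ is an arbitrary Banach space. ``Adapting'' the argument to drop integrable boundedness is not the obstacle; dropping the Asplund hypothesis is, and this is exactly why the paper cites \cite[Theorem~6.7]{mu13} in addition to \cite{ckr11}. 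Your closing paragraph shows you sense this difficulty, but as written the proof of (i) does not go through in general Banach spaces.

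For (ii), the H\"ormander route is the right one and is essentially what \cite{ckr11} does. Your ``Baire/Banach--Steinhaus'' justification for continuity of $p_A$ is imprecise, though the conclusion is correct: since each $s(\cdot,\Gamma(\omega))$ is continuous (bounded $w^*$-compact values), Fatou's lemma makes $p_A$ lower semicontinuous, and a finite lsc convex function on a Banach space is continuous. For (iii), your ``only if'' direction and the support-function identification for \eqref{eq1} are fine in outline (and mirror the proof of Proposition~\ref{lem04}), but note that the extremal-selector step again leans on \cite[Corollary~3.11]{ckr11}, hence on the Asplund property; the general Banach case needs the alternative machinery of \cite{mu13}.
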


We recall some definitions and facts from measure space theory; see, e.g., \cite{fr08}. Denote by $\mathcal{N}(\mu)$ the null ideal of $\Sigma$, i.e., $\mathcal{N}(\mu):=\{N\in\Sigma\mid\mu(N)=0\}$. A measure $\mu$ on a $\sigma$-algebra $\Sigma$ is \textit{$\kappa$-\hspace{0pt}additive} if for every pairwise disjoint family $\E\subset\Sigma$ with $|\E|<\kappa$ we have $\bigcup\E\in\Sigma$ and $\mu(\bigcup\E)=\sum_{A\in\E}\mu(A)$, where the sum is understood as $\sup_{{\cal F}\subset{\cal E},\,|{\cal F}|<\infty}\sum_{A\in{\cal F}}\mu(A)$. The \textit{additivity} $\kappa(\mu)$ of $\mu$ is the largest cardinal of $\kappa$ for which $\mu$ is $\kappa$-\hspace{0pt}additive, or it is $\infty$ if $\mu$ is $\kappa$-\hspace{0pt}additive for every $\kappa$. It follows from the definition that $\kappa(\mu)\ge\aleph_1$ for every measure $\mu$, where $\aleph_1$ signifies the first uncountable cardinal. A useful representation taken from \cite[Proposition~521A]{fr08} is
\begin{eqnarray*}
\kappa(\mu)=\min\left\{|{\cal E}|\mid{\cal E}\subset\mathcal{N}(\mu),\,\bigcup{\cal E}\not\in\Sigma\right\}.
\end{eqnarray*}
Denote further by $\mathrm{dens}(X)$ the \textit{density} of the Banach space $X$, i.e., the smallest cardinal of the form $|D|$, where $D$ is a dense subset of the open unit ball $B_X$ of $X$ with respect to the norm topology.

\begin{Proposition}[\bf equivalence of $\mathit{w}^*$-integrable selectors]\label{dens}
Let $(\Omega,\Sigma,\mu)$ be a complete finite measure space, $X$ be a Banach space, and $\Gamma:\Omega\rightrightarrows X^*$ be a $\mathit{w}^*$-scalarly integrable multifunction with $\mathit{w}^*$-\hspace{0pt}compact and convex values. If $\mathrm{dens}(X)<\kappa(\mu)$, then $\mathcal{S}^1_\Gamma=\mathcal{S}^{1,\mathit{w}^*}_\Gamma$.
\end{Proposition}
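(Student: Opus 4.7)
The inclusion $\mathcal{S}^1_\Gamma \subset \mathcal{S}^{1,w^*}_\Gamma$ is immediate from the definitions (and was already recorded in the excerpt), so the whole task is the reverse inclusion. Fix $f \in \mathcal{S}^{1,w^*}_\Gamma$; I need to upgrade the ``for every $x$, a.e.\ $\omega$'' condition to ``a.e.\ $\omega$, for every $x$'', after which the $w^*$-closed convex values of $\Gamma$ together with Hahn--Banach separation will force $f(\omega) \in \Gamma(\omega)$ almost everywhere, i.e.\ $f \in \mathcal{S}^1_\Gamma$. The whole point of the cardinal assumption $\mathrm{dens}(X)<\kappa(\mu)$ is precisely to license this quantifier swap.

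The plan is to pick a norm-dense subset $D\subset X$ of cardinality $|D|=\mathrm{dens}(X)$; by scaling and absorbing a countable factor into the density (which is harmless as $\kappa(\mu)\ge\aleph_1$) I may assume $D$ is dense in all of $X$. For each $x\in D$ the definition of $w^*$-almost selector produces a null set $N_x\in\mathcal{N}(\mu)$ with
\[
\langle f(\omega),x\rangle \le s(x,\Gamma(\omega))\quad\text{for all }\omega\in\Omega\setminus N_x.
\]
Set $N:=\bigcup_{x\in D}N_x$. Because $|D|<\kappa(\mu)$, the representation
\[
\kappa(\mu)=\min\Bigl\{|\mathcal{E}|\,\Bigm|\,\mathcal{E}\subset\mathcal{N}(\mu),\ \bigcup\mathcal{E}\notin\Sigma\Bigr\}
\]
recalled just before the proposition, together with the $\kappa$-additivity of $\mu$ (after the usual ordinal disjointification of the family $\{N_x\}_{x\in D}$), guarantees both $N\in\Sigma$ and $\mu(N)=0$.

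Now fix any $\omega\in\Omega\setminus N$. Since $\Gamma(\omega)$ is $w^*$-compact, it is norm-bounded (uniform boundedness principle applied to the $w^*$-continuous evaluations), so the support function $s(\cdot,\Gamma(\omega))$ is finite-valued, sublinear, and Lipschitz continuous on $X$ with constant $\sup_{x^*\in\Gamma(\omega)}\|x^*\|$. The map $x\mapsto\langle f(\omega),x\rangle$ is likewise norm continuous. The inequality $\langle f(\omega),x\rangle\le s(x,\Gamma(\omega))$ thus holds throughout the dense set $D$ and extends by continuity to every $x\in X$. Hahn--Banach separation applied to the $w^*$-closed convex set $\Gamma(\omega)$ now yields $f(\omega)\in\Gamma(\omega)$ for every $\omega\in\Omega\setminus N$. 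Since $f$ was already assumed to be $w^*$-integrable, this gives $f\in\mathcal{S}^1_\Gamma$.

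The only substantive obstacle in the argument is the quantifier interchange itself, and it is handled precisely by the bookkeeping in the middle paragraph: one must be careful that the exceptional set collected over the uncountable index set $D$ remains negligible, which is exactly what the hypothesis $\mathrm{dens}(X)<\kappa(\mu)$ encodes via the Fremlin representation of $\kappa(\mu)$. All remaining ingredients, namely the norm boundedness of $w^*$-compact sets and the Lipschitz continuity of support functions of bounded sets, are entirely routine.
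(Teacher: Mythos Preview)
Your proof is correct and follows essentially the same route as the paper's: pick a dense set of cardinality $\mathrm{dens}(X)$, collect the exceptional null sets over this index set, use $\kappa(\mu)$-additivity to obtain a single null set outside of which the inequality $\langle f(\omega),x\rangle\le s(x,\Gamma(\omega))$ holds for all $x$ in the dense set, extend to all $x\in X$ by continuity of both sides (using $w^*$-compactness of $\Gamma(\omega)$), and conclude via separation. The only cosmetic differences are that the paper works with a dense subset of $B_X$ rather than of $X$ and intersects full-measure sets rather than taking a union of null sets; your added remarks on the uniform boundedness principle and Lipschitz continuity of the support function simply spell out what the paper leaves implicit.
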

\begin{proof}
By Proposition \ref{fac}, $\mathcal{S}^{1,\mathit{w}^*}_\Gamma$ is nonempty. Choose any $f\in \mathcal{S}^{1,\mathit{w}^*}_\Gamma$. Pick a dense subset $\{ x_\alpha \}_{\alpha<\mathrm{dens}(X)}$ from $B_X$ and let $E_\alpha\in \Sigma$ with $\mu(\Omega\setminus E_\alpha)=0$ be such that $\langle f(\omega),x_\alpha \rangle\le s(x_\alpha,\Gamma(\omega))$ for every $\omega\in E_\alpha$ and $\alpha$. It follows from $\kappa(\mu)$-\hspace{0pt}additivity that $E:=\bigcap_{\alpha<\mathrm{dens}(X)}E_\alpha$ belongs to $\Sigma$ and $\mu(\Omega\setminus E)=0$. Since $\Gamma$ has $\mathit{w}^*$-compact values and $\{ x_\alpha \}_{\alpha<\mathrm{dens}(X)}$ is dense in $B_X$, the above inequality yields $\langle f(\omega),x \rangle\le s(x,\Gamma(\omega))$ for every $\omega\in E$ and $x\in X$. In view of the $w^*$-compactness and the convexity of $\Gamma(\omega)$, the separation theorem guarantees that $f(\omega)\in \Gamma(\omega)$ for every $\omega\in \Omega$. Therefore, $f\in \mathcal{S}^1_\Gamma$.
\end{proof}

If $X$ is a separable Banach space, then $\mathrm{dens}(X)=\aleph_0$ and hence the density condition in Proposition \ref{dens} obviously holds due to $\kappa(\mu)\ge\aleph_1$. However, the validity of the density condition goes far beyond separability. In particular, if $\Omega$ is a set of cardinality $\kappa$ while $\mu$ is a probability measure on the power set of $\Omega$ with $\mu(\{\omega\})=0$ for every $\omega\in\Omega$, then we have that $\mu$ is $\kappa$-\hspace{0pt}additive; see, e.g., \cite[Proposition~1194]{da14}.
\vspace*{0.05in}

It follows from Definition~\ref{gel}, Proposition~\ref{fac}(ii), and the above relationships between the sets $\mathcal{S}^1_\Gamma$ and $\mathcal{S}^{1,\mathit{w}^*}_\Gamma$ that in the $\mathit{w}^*$-compact and convex-valued setting of Definition~\ref{gel-convex} we have the inclusion
\begin{eqnarray}\label{gel-rel}
\int\Gamma d\mu\subset w^*\mbox{-}\int\Gamma d\mu,
\end{eqnarray}
which holds as equality if $\mathrm{dens}(X)<\kappa(\mu)$ in view of Proposition \ref{dens}.
\vspace*{0.05in}

Now we are ready to derive the major result of this section, which establishes the $\mathit{w}^*$-\hspace{0pt}compactness and {\em convexity} of the $\mathit{w}^*$-\hspace{0pt}closure of the Gelfand integral \eqref{eq0} of multifunctions under the nonatomicity hypothesis. It improves the result of \cite[Theorem~2]{kh85} in the sense of removing the separability assumption on $X$ and weakening the graph measurability of $\Gamma$ in $\Sigma\otimes \mathrm{Borel}(X^*,\mathit{w}^*)$ to the $\mathit{w}^*$-\hspace{0pt}measurability in the Asplund space setting.

\begin{Theorem} [\bf convexity of the closure of the Gelfand integral]\label{thm1} Let $(\Omega,\Sigma,\mu)$ be a complete and nonatomic finite measure space, and let $X$ be an Asplund space with $\mathrm{dens}(X)<\kappa(\mu)$. If $\Gamma\colon\Omega\rightrightarrows X^*$ is an integrably bounded, $\mathit{w}^*$-\hspace{0pt}scalarly measurable, Gelfand integrable multifunction with $\mathit{w}^*$-\hspace{0pt}closed values, then the set $\overline{I(\Gamma)}^{\,\mathit{w}^*}$ is $\mathit{w}^*$-\hspace{0pt}compact and convex satisfying the relationship $\overline{I(\Gamma)}^{\,\mathit{w}^*}=I(\overline{\mathrm{co}}^{\,\mathit{w}^*}\Gamma)$.
\end{Theorem}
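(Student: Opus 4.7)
My plan is to prove the three conclusions in order: $w^*$-compactness via Banach-Alaoglu, convexity of $\overline{I(\Gamma)}^{\,w^*}$ by a direct Lyapunov exchange argument, and the identification $\overline{I(\Gamma)}^{\,w^*}=I(\overline{\mathrm{co}}^{\,w^*}\Gamma)$ by support-function matching. For $w^*$-compactness, integrable boundedness by some $\psi\in L^1(\mu)$ gives $\|\int f\,d\mu\|\le\int\psi\,d\mu$ for every $f\in\mathcal{S}^1_\Gamma$, so $I(\Gamma)$ is norm-bounded and $\overline{I(\Gamma)}^{\,w^*}$ is a $w^*$-closed subset of a $w^*$-compact norm ball.

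For the convexity of $\overline{I(\Gamma)}^{\,w^*}$ — the heart of the argument and the only step in which nonatomicity of $\mu$ enters — I would fix $y^*_j=\int f_j\,d\mu$ with $f_j\in\mathcal{S}^1_\Gamma$ for $j=1,2$ and $\lambda\in(0,1)$. Given any finite tuple $x_1,\dots,x_n\in X$, the $\R^n$-valued vector measure $\nu(A):=\bigl(\int_A\langle f_1-f_2,x_i\rangle\,d\mu\bigr)_{i=1}^n$ is nonatomic, so the classical finite-dimensional Lyapunov convexity theorem yields $A\in\Sigma$ with $\nu(A)=\lambda\,\nu(\Omega)$. Setting $g:=\mathbf{1}_A f_1+\mathbf{1}_{\Omega\setminus A}f_2$ produces a selector in $\mathcal{S}^1_\Gamma$ (integrable boundedness and $w^*$-scalar measurability of $f_1,f_2$ are both preserved under this patching) satisfying $\langle\int g\,d\mu,x_i\rangle=\lambda\langle y^*_1,x_i\rangle+(1-\lambda)\langle y^*_2,x_i\rangle$ for each $i$. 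Running the construction over the directed set of finite subsets of $X$ produces a net in $I(\Gamma)$ whose $w^*$-limit is $\lambda y^*_1+(1-\lambda)y^*_2$, placing this convex combination in $\overline{I(\Gamma)}^{\,w^*}$. This is the main obstacle; the Asplund property and the density condition are unused here but are essential for the identification that follows.

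For the identification, set $\Gamma':=\overline{\mathrm{co}}^{\,w^*}\Gamma$. Since $s(x,\Gamma')=s(x,\Gamma)$ for all $x\in X$, $\Gamma'$ is $w^*$-scalarly measurable and integrably bounded, and has $w^*$-compact convex values (each contained in $\psi(\omega)B_{X^*}$). Proposition 2.6(ii) then makes $\Gamma'$ $w^*$-integrable in the sense of Definition 2.4, and the density hypothesis $\mathrm{dens}(X)<\kappa(\mu)$ activates Proposition 2.7 to give $\mathcal{S}^1_{\Gamma'}=\mathcal{S}^{1,w^*}_{\Gamma'}$; together with Proposition 2.6(iii), this identifies $I(\Gamma')$ with $w^*\text{-}\int\Gamma'\,d\mu$, a $w^*$-compact convex set. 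Applying Proposition 2.5 separately to $\Gamma$ and to $\Gamma'$ and invoking $s(x,\Gamma)=s(x,\Gamma')$ yields $s(x,I(\Gamma))=s(x,I(\Gamma'))$ for every $x\in X$, hence $s(x,\overline{I(\Gamma)}^{\,w^*})=s(x,I(\Gamma'))$ by $w^*$-lower semicontinuity of support functions on bounded sets. Both sets being $w^*$-compact convex (the left by the preceding step, the right by the above), Hahn-Banach separation forces $\overline{I(\Gamma)}^{\,w^*}=I(\Gamma')$, completing the proof.
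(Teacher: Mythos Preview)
Your proof is correct and follows essentially the same route as the paper's: the identification $\overline{I(\Gamma)}^{\,w^*}=I(\overline{\mathrm{co}}^{\,w^*}\Gamma)$ is obtained in both by matching support functions through the supremum representation (the paper's Proposition~\ref{lem04}), the density result (Proposition~\ref{dens}), and the $w^*$-integral characterization (Proposition~\ref{fac}). The only difference is cosmetic: where the paper imports the $w^*$-compactness and convexity of $\overline{I(\Gamma)}^{\,w^*}$ from Khan \cite{kh85}, you supply a self-contained Lyapunov exchange/net argument, which is exactly the standard proof of that cited fact.
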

\begin{proof} Proceeding as in \cite[Claim 3 of the Proof of Theorem 2]{kh85}, we conclude that $\overline{I(\Gamma)}^{\,\mathit{w}^*}$ is $\mathit{w}^*$-\hspace{0pt}compact and convex. Employing now the relationship
$$
w^*\mbox{-}\int\overline{\mathrm{co}}^{\,\mathit{w}^*}\Gamma d\mu=I(\overline{\mathrm{co}}^{\,\mathit{w}^*}\Gamma)
$$
that is due to Proposition \ref{dens} and the $w^*$-integral representation \eqref{eq1} in Proposition~\ref{fac}, we get
\begin{eqnarray*}
\begin{array}{ll}
\disp s(x,I(\overline{\mathrm{co}}^{\,\mathit{w}^*}\Gamma))&=s\left(x,\disp w^*\mbox{-}\int\overline{\mathrm{co}}^{\,\mathit{w}^*}\Gamma d\mu\right)=
\disp\int s(x,\overline{\mathrm{co}}^{\,\mathit{w}^*}\Gamma)d\mu\\
&=\disp\int s(x,\Gamma)d\mu=\sup_{f\in\mathcal{S}^1_\Gamma}\int\langle f(\omega),x \rangle d\mu\\
&=\disp\sup_{f\in\mathcal{S}^1_\Gamma}\left\langle \int fd\mu,x \right\rangle=s(x,I(\Gamma))=s\left(x,\overline{I(\Gamma)}^{\,\mathit{w}^*}\right)
\end{array}
\end{eqnarray*}
for every $x\in X$, where the second equality follows from Definition~\ref{gel-convex} of the $w^*$-integrals and the forth equality uses Proposition~\ref{lem04} (where we employ the Asplund property). Since $\overline{I(\Gamma)}^{\,\mathit{w}^*}$ and $I(\overline{\mathrm{co}}^{\,\mathit{w}^*}\Gamma)$ are $\mathit{w}^*$-\hspace{0pt}compact and convex, we thus obtain the claimed equality $\overline{I(\Gamma)}^{\,\mathit{w}^*}=I(\overline{\mathrm{co}}^{\,\mathit{w}^*}\Gamma)$ by using the convex separation theorem.
\end{proof}

It has been well-recognized in the literature that the finite-dimensional formulation of the Lyapunov convexity theorem fails to hold in infinite-\hspace{0pt}dimensional Banach spaces without the additional {\em closure} operation; see, e.g., \cite[Examples IX.1.1 and 1.2]{du77}). Due to this, the nonatomicity of the measure $\mu$ is insufficient to guarantee that the Gelfand integral $I(\Gamma)$ is $\mathit{w}^*$-\hspace{0pt}compact and convex, and thus the closure operation is inevitable in Theorem~\ref{thm1}. To overcome this difficulty, we introduce the saturation requirement on measure spaces along the lines of \cite{fk02,hk84,ks09,ma42}, which ensures the validity of the Lyapunov convexity theorem with {\em no closure} operation in dual spaces to separable Banach spaces; see \cite{ks15a}). Furthermore, the saturation property is also necessary for the $\mathit{w}^*$-\hspace{0pt}compactness and the convexity of the Gelfand integral of multifunctions as well as for the Lyapunov convexity theorem in separable Banach spaces; see \cite{ks13}.

Recall that a finite measure space is said to be \textit{essentially countably generated} if its $\sigma$-\hspace{0pt}algebra can be generated by a countable number of subsets together with measure null sets. If it is not the case, then the measure space under consideration is said to be \textit{essentially uncountably generated}. Let $\Sigma_E=\{A\cap E\mid A\in\Sigma\}$ be the restriction of the $\sigma$-\hspace{0pt}algebra $\Sigma$ to a measurable set $E\in\Sigma$. Denote by $L^1_E(\mu)$ the space of $\mu$-integrable functions defined on the restricted measure space $(E,\Sigma_E,\mu)$.

\begin{Definition} [\bf measure saturation]\label{satur} A finite measure space $(\Omega,\Sigma,\mu)$ is {\sc saturated} if the space $L^1_E(\mu)$ is nonseparable with respect to the $L^1$-\hspace{0pt}norm topology for every $E\in\Sigma$ with $\mu(E)>0$.
\end{Definition}

This property, which surely implies nonatomicity, and the results related to it have been found as an important tool for various economic applications and probabilistic models; see, in particular, the papers on saturation listed above. Several equivalent definitions of the saturation property are known; see, e.g., \cite{ks09}. One of the remarkable characterizations of saturation is as follows: a finite measure space $(\Omega,\Sigma,\mu)$ is saturated if and only if $(E,\Sigma_E,\mu)$ is essentially uncountably generated for every $E\in\Sigma$ with $\mu(E)>0$.

\begin{Proposition}[\bf convexity of the Gelfand integral]\label{saturation} Let $(\Omega,\Sigma,\mu)$ be a complete and saturated finite measure space, and let $X$ be a separable Asplund space. If $\Gamma\colon\Omega\rightrightarrows X^*$ is an integrably bounded and $\mathit{w}^*$-\hspace{0pt}scalarly measurable multifunction with $\mathit{w}^*$-\hspace{0pt}closed values, then the set $I(\Gamma)$ is $\mathit{w}^*$-\hspace{0pt}compact and convex satisfying the relationship $I(\Gamma)=I(\overline{\mathrm{co}}^{\,\mathit{w}^*}\Gamma)$.
\end{Proposition}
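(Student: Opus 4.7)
The plan is to combine Theorem~\ref{thm1} with the sharpened Lyapunov-type convexity theorem for Gelfand integrals under the saturation hypothesis. First I would verify that the hypotheses of Theorem~\ref{thm1} are met in the present setting: separability of $X$ gives $\mathrm{dens}(X)=\aleph_0<\aleph_1\le\kappa(\mu)$, so the density/additivity condition holds automatically, and saturation implies nonatomicity. Applying Theorem~\ref{thm1} therefore yields that $\overline{I(\Gamma)}^{\,\mathit{w}^*}$ is $\mathit{w}^*$-compact and convex and that the identity
\[
\overline{I(\Gamma)}^{\,\mathit{w}^*}=I(\overline{\mathrm{co}}^{\,\mathit{w}^*}\Gamma)
\]
holds. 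Thus the proposition will follow if one can drop the $\mathit{w}^*$-closure from $I(\Gamma)$, i.e.\ show that $I(\Gamma)$ is already $\mathit{w}^*$-closed (equivalently, $\mathit{w}^*$-compact) and convex.

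The second step is exactly where saturation enters. I would invoke the enhanced Lyapunov convexity theorem for the Gelfand integral in duals of separable Banach spaces established by Khan--Sagara \cite{ks15a}: for a complete saturated finite measure space $\Omega$ and a separable Banach space $X$, the Gelfand integral of any integrably bounded, $\mathit{w}^*$-scalarly measurable multifunction $\Gamma\colon\Omega\rightrightarrows X^*$ with $\mathit{w}^*$-closed values is itself $\mathit{w}^*$-compact and convex, with no need for the closure operation. The Asplund property of $X$ is used implicitly to guarantee that Proposition~\ref{thm:ckr2} applies and so $\mathcal{S}^1_\Gamma\ne\emptyset$, but the core convexity statement is the Lyapunov-type theorem in \cite{ks15a}.

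Finally, I would assemble the two ingredients. Since $I(\Gamma)$ is $\mathit{w}^*$-compact by the saturation-based Lyapunov theorem, it coincides with its $\mathit{w}^*$-closure, so
\[
I(\Gamma)=\overline{I(\Gamma)}^{\,\mathit{w}^*}=I(\overline{\mathrm{co}}^{\,\mathit{w}^*}\Gamma),
\]
where the second equality is the content of Theorem~\ref{thm1}. This establishes all the asserted properties in one shot.

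The main obstacle, as I see it, is purely bibliographic/verificational rather than technical: one must check that the Lyapunov-type theorem of \cite{ks15a} is stated in sufficient generality to cover $\mathit{w}^*$-closed (not necessarily convex or $\mathit{w}^*$-compact) values and $\mathit{w}^*$-scalarly measurable (as opposed to graph-measurable) multifunctions. If the cited statement is slightly narrower, one would first pass to the $\mathit{w}^*$-compact-valued multifunction $\overline{\mathrm{co}}^{\,\mathit{w}^*}\Gamma$, apply the saturation theorem there to obtain $\mathit{w}^*$-compactness and convexity of $I(\overline{\mathrm{co}}^{\,\mathit{w}^*}\Gamma)$, and then use the chain of equalities from Theorem~\ref{thm1} together with the support-function computation from Proposition~\ref{lem04} to transfer the conclusion back to $I(\Gamma)$.
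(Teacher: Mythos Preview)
Your proposal is correct and follows essentially the same route as the paper: first invoke Theorem~\ref{thm1} to obtain $\overline{I(\Gamma)}^{\,\mathit{w}^*}=I(\overline{\mathrm{co}}^{\,\mathit{w}^*}\Gamma)$, then use a saturation-based Lyapunov-type result to conclude that $I(\Gamma)$ is already $\mathit{w}^*$-compact and convex, hence equal to its $\mathit{w}^*$-closure. The only discrepancy is bibliographic: the paper cites \cite[Theorem~11]{gp15} (Greinecker--Podczeck) for the $\mathit{w}^*$-compactness and convexity of $I(\Gamma)$ under saturation, rather than \cite{ks15a}; this resolves the verificational obstacle you flagged, since \cite{gp15} is stated precisely for integrably bounded, $\mathit{w}^*$-scalarly measurable multifunctions with $\mathit{w}^*$-closed values in duals of separable Banach spaces.
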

\begin{proof}
It follows from \cite[Theorem 11]{gp15} that $I(\Gamma)$ is $\mathit{w}^*$-compact and convex. Since $I(\overline{\mathrm{co}}^{\,\mathit{w}^*}\Gamma)$ is $\mathit{w}^*$-compact and convex, and $I(\Gamma)\subset I(\overline{\mathrm{co}}^{\,\mathit{w}^*}\Gamma)=\overline{I(\Gamma)}^{\,\mathit{w}^*}$ by Theorem \ref{thm1}, we obtain the equality $I(\Gamma)=I(\overline{\mathrm{co}}^{\,\mathit{w}^*}\Gamma)$.
\end{proof}

\section{Subdifferentiation of Integral Functionals}

This section concerns integral functionals of type \eqref{int-f} defined via the integral of the integrand functions $\ph(x,\omega)$ that are locally Lipschitzian in $x$ and $\mu$-measurable in $\omega$. For convenience and without loss of generality in the proofs below, we suppose in what follows that $\ph(x,\cdot)$ is $\mu$-integrable for any $x$ around the reference point $\ox$. According to the discussions in Section~1, our goal is to derive appropriate subdifferential versions of the classical Leibniz rule on differentiation under the integral sign, where the integration of set-valued subdifferential mappings is understood accordingly either in the Gelfand sense of Definition~\ref{gel}, or in the modified sense of the $w^*$-integral from Definition~\ref{gel-convex} for convex-valued multifunctions.

We begin with the generalized differential constructions by Clarke \cite{cl83} for Lipschitz continuous functions on arbitrary Banach spaces. Given $\phi\colon X\to\oR$ locally Lipschitzian around $\ox$, recall first its {\em generalized directional derivative} at $\ox$ in the direction $h\in X$ defined by
\begin{eqnarray}\label{dd}
\phi^\circ(\bar{x};h):=\disp\limsup_\substack{x\to\ox}{\theta\downarrow 0}\frac{\phi(x+\theta h)-\phi(x)}{\theta}.
\end{eqnarray}
A crucial property of the function $h\mapsto\phi^\circ(\bar{x};h)$ is its automatic convexity, which is the source---together with the convex separation theorem---of nice calculus rules for it as well as for the {\em generalized gradient} (known also as the convexified or Clarke subdifferential) of $\phi$ at $\ox$ induced by \eqref{dd} via the conventional duality scheme
\begin{eqnarray}\label{gg}
\partial^\circ\phi(\bar{x}):=\big\{x^*\in X^*\big|\;\langle x^*,h\rangle\le\phi^\circ(\bar{x};h)\;\mbox{ for every }\;h\in X\big\}
\end{eqnarray}
of generating subdifferentials from directional derivatives. It is easy to observe that the set $\partial^\circ\phi(\bar{x})$ is nonempty, convex, and $\mathit{w}^*$-\hspace{0pt}compact in $X^*$. Furthermore, the convexity of $\phi^\circ(\ox;\cdot)$ easily implies by convex separation that \eqref{dd} is the support function \eqref{supp} of the generalized gradient, i.e., we have
\begin{eqnarray}\label{supp-g}
\phi^\circ(\bar{x};h)=s\big(h,\partial^\circ\phi(\bar{x})\big)\quad\text{for every $h\in X$}.
\end{eqnarray}
Recall that $\phi$ is {\em regular} at $\ox$ if the classical directional derivative
\begin{eqnarray}\label{dd0}
\phi'(\bar{x};h):=\lim_{\theta\downarrow 0}\frac{\phi(\bar{x}+\theta h)-\phi(\bar{x})}{\theta}
\end{eqnarray}
exists and agrees with \eqref{dd}, i.e., $\phi'(\bar{x};h)=\phi^\circ(\bar{x};h)$ for every $h\in X$. The class of regular functions contains smooth and convex ones as well as their reasonable extensions and compositions; see \cite{cl83} for the facts reviewed above.

Having an integrand $\ph\colon X\times\O\to\oR$ in \eqref{int-f} defined on the product of a Banach space $X$ and a finite measure space $(\O,\Sigma,\mu)$ and assuming that $\ph(\cdot,\omega)$ is locally Lipschitzian around some point $\ox$, we consider the generalized directional derivative $\ph^\circ(\ox,\omega;h)$ of $\ph$ at $\ox$ in the direction $h\in X$ for each fixed $\omega\in\O$ and then the generalized gradient $\partial^\circ\ph(\ox,\omega)$ of $\ph$ at $\ox$ induced by $\ph^\circ(\ox,\omega;h)$ in \eqref{gg}. It follows from the definition of the $\mathit{w}^*$-scalar measurability of the multifunction and construction \eqref{gg} of the generalized gradient that the multifunction $\partial^\circ\ph(\ox,\cdot)$ is $\mathit{w^*}$-scalarly measurable (resp.\ $\mathit{w}^*$-scalarly integrable) if and only if the generalized directional derivative function $\varphi^\circ(\bar{x},\cdot;h)$ is measurable (resp.\ integrable) for each $h\in X$. This surely holds when $\varphi$ is regular at $\ox$, since in this case we get $\varphi^\circ(\bar{x},\omega;h)=\varphi'(\bar{x},\omega;h)$, and thus $\varphi^\circ(\bar{x},\cdot;h)$ is measurable as the pointwise limit \eqref{dd0} of a sequence of measurable functions. Since the multifunction $\partial^\circ\ph(\ox,\cdot)$ has convex and $w^*$-compact values in $X^*$, we proceed in what follows with evaluating the generalized gradient of the integral functional \eqref{int-f} at $\ox$ in terms of the {\em $w^*$-integral} of the generalized gradient mapping $\partial^\circ\ph(\ox,\cdot)$.

The next theorem justifies a generalized Leibniz rule in this vein for the case of general Banach spaces $X$. Its Bochner integral counterpart was obtained in \cite[Theorem~2.7.2]{cl83} for separable Banach spaces, and the separability assumption seems to be indispensable in the proof therein.

\begin{Theorem}[\bf generalized gradients of integral functionals in Banach spaces]\label{cla}
Let $(\Omega,\Sigma,\mu)$ be a complete finite measure space, and let $X$ be an arbitrary Banach space. Consider the integral functional $I_\ph$ in \eqref{int-f}, where the integrand $\varphi\colon X\times\Omega\to\oR$ is Lipschitz continuous in $x$ on some neighborhood $U$ of $\ox$ for every $\omega\in\O$ with an integrable Lipschitz modulus being uniform on $U$. Assume also that $\varphi(x,\cdot)$ is measurable for every $x\in U$ and that the multifunction $\partial^\circ\varphi(\bar{x},\cdot)$ is $\mathit{w}^*$-\hspace{0pt}scalarly measurable. Then $I_\ph$ is locally Lipschitzian around $\ox$ and we have the inclusion
\begin{eqnarray}\label{cl-inc}
\partial^\circ I_\varphi(\bar{x})\subset w^*\mbox{-}\disp\int\partial^\circ\varphi(\bar{x},\omega)d\mu=\disp\int\partial^\circ\varphi(\bar{x},\omega)d\mu.
\end{eqnarray}
If furthermore $\varphi(\cdot,\omega)$ is regular at $\bar{x}$ for every $\omega\in\Omega$, then $I_\varphi$ is also regular at this point and \eqref{cl-inc} holds as equality.
\end{Theorem}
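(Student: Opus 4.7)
My proof would proceed in three stages, following Clarke's classical scheme but adapted to a general Banach space via the $w^*$-integral of Definition~\ref{gel-convex}.

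\textit{Stage 1: Lipschitz continuity and a reverse-Fatou estimate on $I_\varphi^\circ$.} Write $k\in L^1(\mu)$ for the uniform integrable Lipschitz modulus of $\varphi(\cdot,\omega)$ on $U$. Integrating the pointwise bound $|\varphi(x,\omega)-\varphi(y,\omega)|\le k(\omega)\|x-y\|$ yields that $I_\varphi$ is Lipschitz on $U$ with constant $\int k\,d\mu$. To estimate the generalized directional derivative, I would pick sequences $x_n\to\bar{x}$ and $\theta_n\downarrow 0$ whose difference quotients realize $I_\varphi^\circ(\bar{x};h)$ and apply the reverse Fatou lemma to
$$\psi_n(\omega):=\frac{\varphi(x_n+\theta_n h,\omega)-\varphi(x_n,\omega)}{\theta_n},$$
which are pointwise dominated by the integrable function $\omega\mapsto k(\omega)\|h\|$. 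Since $\limsup_n\psi_n(\omega)\le\varphi^\circ(\bar{x},\omega;h)$ by the very definition of the generalized directional derivative, this delivers
$$I_\varphi^\circ(\bar{x};h)\le\int\varphi^\circ(\bar{x},\omega;h)\,d\mu\quad\text{for every }h\in X.$$

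\textit{Stage 2: Translation into the $w^*$-integral of $\partial^\circ\varphi(\bar{x},\cdot)$.} By \eqref{supp-g} the integrand above is $s(h,\partial^\circ\varphi(\bar{x},\omega))$. Since $\partial^\circ\varphi(\bar{x},\omega)\subset k(\omega)B_{X^*}$ as a standard consequence of the Lipschitz bound, the multifunction $\partial^\circ\varphi(\bar{x},\cdot)$ has $w^*$-compact convex values, is $w^*$-scalarly measurable by hypothesis, and is $w^*$-scalarly integrable in view of $|s(h,\partial^\circ\varphi(\bar{x},\omega))|\le k(\omega)\|h\|$. Proposition~\ref{fac}(ii) then makes it $w^*$-integrable in the sense of Definition~\ref{gel-convex}, so
$$\int s(h,\partial^\circ\varphi(\bar{x},\omega))\,d\mu\;=\;s\!\left(h,\;w^*\text{-}\!\!\int\partial^\circ\varphi(\bar{x},\omega)\,d\mu\right)\quad\text{for all }h\in X.$$
Combining this with Stage~1 and the defining duality \eqref{gg}, the $w^*$-compact convex set $w^*\text{-}\int\partial^\circ\varphi(\bar{x},\omega)\,d\mu$ dominates $\partial^\circ I_\varphi(\bar{x})$ in the support-function sense, and convex separation delivers the inclusion $\partial^\circ I_\varphi(\bar{x})\subset w^*\text{-}\int\partial^\circ\varphi(\bar{x},\omega)\,d\mu$. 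The asserted equality of this set with the (ordinary) Gelfand integral in \eqref{cl-inc} requires the reverse of \eqref{gel-rel}: by Proposition~\ref{fac}(iii) it amounts to showing that any $w^*$-scalarly measurable $w^*$-almost selector of $\partial^\circ\varphi(\bar{x},\cdot)$ has the same Gelfand integral as a genuine integrable selector. \textbf{This is the step I expect to be the main obstacle}, since in an arbitrary (non-separable, non-Asplund) Banach space the identification $\mathcal{S}^1_\Gamma=\mathcal{S}^{1,w^*}_\Gamma$ is not guaranteed; my line of attack would be to exploit the special structure of the Clarke gradient---its $w^*$-closedness, the norm bound by $k(\omega)$, and the Lipschitz dependence of $\varphi^\circ(\bar{x},\omega;\cdot)$ on $h$---to modify a given $w^*$-almost selector on a null set into a genuine selector with the same Gelfand integral.

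\textit{Stage 3: The regular case.} If each $\varphi(\cdot,\omega)$ is regular at $\bar{x}$, then $\varphi^\circ(\bar{x},\omega;h)=\varphi'(\bar{x},\omega;h)=\lim_{\theta\downarrow 0}[\varphi(\bar{x}+\theta h,\omega)-\varphi(\bar{x},\omega)]/\theta$ pointwise, with the difference quotients dominated by $k(\omega)\|h\|$. Lebesgue dominated convergence then gives
$$I_\varphi'(\bar{x};h)=\int\varphi'(\bar{x},\omega;h)\,d\mu=\int\varphi^\circ(\bar{x},\omega;h)\,d\mu.$$
Since unconditionally $I_\varphi'(\bar{x};h)\le I_\varphi^\circ(\bar{x};h)\le\int\varphi^\circ(\bar{x},\omega;h)\,d\mu$, equality is forced throughout, so $I_\varphi$ is regular at $\bar{x}$ and the support functions of $\partial^\circ I_\varphi(\bar{x})$ and of $w^*\text{-}\int\partial^\circ\varphi(\bar{x},\omega)\,d\mu$ coincide. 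Convex separation then upgrades the inclusion in \eqref{cl-inc} to an equality, completing the proof modulo the Stage~2 obstacle.
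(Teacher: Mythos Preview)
Your Stages~1--3 mirror the paper's proof closely. For the inclusion, the paper also picks sequences realizing $I_\varphi^\circ(\bar{x};h)$, passes the limit inside the integral via Lebesgue dominated convergence (your reverse Fatou serves the same purpose), identifies the resulting integrand with $s(h,\partial^\circ\varphi(\bar{x},\omega))$ via \eqref{supp-g}, recognizes $\int s(h,\Gamma)\,d\mu$ as the support function of $w^*\text{-}\int\Gamma\,d\mu$ from Definition~\ref{gel-convex}, and concludes by separation. For the regular case the paper simply refers to the corresponding part of \cite[Theorem~2.7.2]{cl83}, which is exactly your Stage~3 argument.

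On the equality $w^*\text{-}\int\partial^\circ\varphi(\bar{x},\cdot)\,d\mu=\int\partial^\circ\varphi(\bar{x},\cdot)\,d\mu$ that you single out as the main obstacle: the paper dispatches it in one sentence, writing that ``since every selector from $\partial^\circ\varphi(\bar{x},\cdot)$ is a $w^*$-almost selector by \eqref{supp-g}, we have the equality in \eqref{cl-inc} by \eqref{eq1}.'' Taken literally, this observation yields only $\mathcal{S}^1_\Gamma\subset\mathcal{S}^{1,w^*}_\Gamma$, hence the inclusion $\int\Gamma\,d\mu\subset w^*\text{-}\int\Gamma\,d\mu$ already recorded in \eqref{gel-rel}; the reverse inclusion---which, as you note, would need something like $\mathcal{S}^1_\Gamma=\mathcal{S}^{1,w^*}_\Gamma$ (cf.\ Proposition~\ref{dens}, where the extra hypothesis $\mathrm{dens}(X)<\kappa(\mu)$ is imposed precisely for this)---is not separately argued. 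So your overall strategy coincides with the paper's, and the obstacle you flag is one the paper's own proof does not resolve beyond the one-line assertion; no special structure of the Clarke gradient is invoked there to close the gap you anticipate.
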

\begin{proof}
Observe first that the local Lipschitz continuity of $I_\ph$ around $\ox$ follows from the one for $\ph(\cdot,\omega)$ with the uniformly integrable Lipschitz modulus on the finite measure space. Further, the $\mathit{w}^*$-scalarly integrability of the multifunction $\partial^\circ\varphi(\bar{x},\cdot)$ implies its $\mathit{w}^*$-integrability by Proposition \ref{fac}(ii) and the $\mu$-integrability of the Lipschitz modulus readily yields the $\mu$-integrability for the function $\omega\mapsto\varphi^\circ(\bar{x},\omega;h)$. To verify now inclusion \eqref{cl-inc}, fix $\bar{x}\in X$ and $h\in X$. Take any $x_n\to\bar{x}$ and $\theta\downarrow 0$ satisfying
$$
I_\varphi^\circ(\bar{x};h)=\disp\lim_{n\to\infty}\frac{I_\varphi(x_n+\theta_nh)-I_\varphi(x_n)}{\theta_n}.
$$
By definition of the generalized directional derivative \eqref{dd} we have
\begin{eqnarray*}
\begin{array}{ll}
s\big(h,\partial^\circ I_\varphi(\bar{x})\big)=I_\varphi^\circ(\bar{x};h)
&=\disp\lim_{n\to \infty}\disp\int\frac{\varphi(x_n+\theta_n h,\omega)-\varphi(x_n,\omega)}{\theta_n}d\mu\\
&=\disp\int\lim_{n\to\infty}\frac{\varphi(x_n+\theta_nh,\omega)-\varphi(x_n,\omega)}{\theta_n}d\mu\\
&\le \disp\int\varphi^\circ(\bar{x},\omega;h)d\mu\\
&=\disp\int s\big(h,\partial^\circ\varphi(\bar{x},\omega)\big)d\mu\\&=s\Big(h,\disp w^*\mbox{-}\int\partial^\circ\varphi(\bar{x},\omega)d\mu\Big)
\end{array}
\end{eqnarray*}
where we invoke the Lebesgue dominated convergence theorem in the second lines and the last equality follows from Definition~\ref{gel-convex} of the $w^*$-integral for convex-valued multifunctions. Employing the separation theorem verifies the claimed inclusion in \eqref{cl-inc}. Since every selector from $\partial^\circ\varphi(\bar{x},\cdot)$ is a $\mathit{w}^*$-almost selector by \eqref{supp-g}, we have the equality in \eqref{cl-inc} by \eqref{eq1}. The equality and regularity statements of the theorem can be justified similarly to the corresponding part in the proof of \cite[Theorem~2.7.2]{cl83}.
\end{proof}

Let us present a direct consequence of Theorem~\ref{cla}, which gives us a precise extension of the classical Leibniz rule as {\em equality} for integral functionals on Banach spaces under an appropriate differentiability assumption on the integrand. Recall that a function $\phi\colon X\to\overline{\R}$ is \textit{strictly differentiable} at $\bar{x}$ with its strict derivative $\nabla\phi(\bar{x})\in X^*$ if $\phi(\bar{x})<\infty$ and
\[
\lim_\substack{h\to 0}{x\to\bar{x}}\frac{\phi(x+h)-\phi(x)-\langle\nabla\phi(\bar{x}),h\rangle}{\|h\|}=0.
\]
This property implies that $\phi$ is locally Lipschitzian around $\ox$ and regular at this point with $\nabla\phi(\bar{x})=\{\nabla\phi(\ox)\}$; see \cite[Propositions~2.2.4 and 2.3.6]{cl83}.

\begin{Corollary}[\bf Leibniz rule for Gelfand integral functionals with strictly differentiable integrands]\label{lieb-sd} In addition to the general setting of Theorem~{\rm\ref{cla}} suppose that $\varphi(\cdot,\omega)$ is strictly differentiable at $\bar{x}$ for every $\omega\in\Omega$ with $\|\nabla\varphi(\bar{x},\cdot)\|\in L^1(\mu)$ and that $\varphi(x,\cdot)$ is measurable for every $x$ around $\bar{x}$. Then $I_\varphi$ is locally Lipschitzian around $\ox$ and we have
\begin{eqnarray}\label{cla-sd}
\nabla I_\varphi(\bar{x})=\int\nabla\varphi(\bar{x},\omega)d\mu,
\end{eqnarray}
where the strict derivative of $\varphi$ in \eqref{cla-sd} is taken with respect of $x$.
\end{Corollary}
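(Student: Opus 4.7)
The plan is to reduce the corollary to the equality case of Theorem~\ref{cla}. Strict differentiability of $\ph(\cdot,\omega)$ at $\ox$ implies, by \cite[Propositions~2.2.4 and 2.3.6]{cl83}, that $\ph(\cdot,\omega)$ is locally Lipschitzian around $\ox$ and \emph{regular} at this point, with the generalized gradient collapsing to a singleton and the generalized directional derivative being linear in $h$:
$$
\partial^\circ\ph(\ox,\omega)=\{\nabla\ph(\ox,\omega)\},\qquad\ph^\circ(\ox,\omega;h)=\langle\nabla\ph(\ox,\omega),h\rangle\quad\mbox{for every }h\in X.
$$
In particular, the regularity hypothesis needed for the equality case of Theorem~\ref{cla} comes for free from strict differentiability.

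The only assumption of Theorem~\ref{cla} I would still have to verify is the $w^*$-scalar measurability of the singleton multifunction $\partial^\circ\ph(\ox,\cdot)$. For this, I would note that for each fixed $h\in X$ the scalar function
$$
\omega\longmapsto s\big(h,\partial^\circ\ph(\ox,\omega)\big)=\langle\nabla\ph(\ox,\omega),h\rangle=\lim_{n\to\infty}\frac{\ph(\ox+h/n,\omega)-\ph(\ox,\omega)}{1/n}
$$
is a pointwise limit of measurable difference quotients, where the measurability of each quotient follows from the standing assumption that $\ph(x,\cdot)$ is measurable for every $x$ in a neighborhood of $\ox$; hence it is measurable. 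The hypothesis $\|\nabla\ph(\ox,\cdot)\|\in L^1(\mu)$ then secures $w^*$-scalar integrability of the single-valued mapping $\omega\mapsto\nabla\ph(\ox,\omega)$ and, by the theorem of Aliprantis--Border cited in Section~2, turns the right-hand side of \eqref{cla-sd} into a well-defined element of $X^*$.

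Applying the equality statement of Theorem~\ref{cla} then yields that $I_\ph$ is locally Lipschitzian (and regular) at $\ox$ with
$$
\partial^\circ I_\ph(\ox)=\int\partial^\circ\ph(\ox,\omega)d\mu=\int\{\nabla\ph(\ox,\omega)\}d\mu=\left\{\int\nabla\ph(\ox,\omega)d\mu\right\},
$$
where the last equality is immediate from Definition~\ref{gel} applied to a singleton-valued multifunction, whose only $w^*$-integrable selector is $\nabla\ph(\ox,\cdot)$ itself. Thus $\partial^\circ I_\ph(\ox)$ reduces to a single point of $X^*$.

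Combining the singleton property of $\partial^\circ I_\ph(\ox)$ with the local Lipschitz continuity of $I_\ph$, I would invoke the converse direction of \cite[Proposition~2.2.4]{cl83}, characterizing strict differentiability of a Lipschitz function via its generalized gradient being a singleton, to conclude that $I_\ph$ is strictly differentiable at $\ox$ with $\nabla I_\ph(\ox)$ equal to the unique element of $\partial^\circ I_\ph(\ox)$. This is precisely \eqref{cla-sd}. I do not anticipate any serious obstacle: the whole argument is a direct specialization of Theorem~\ref{cla} to the singleton case, and the only non-cosmetic input from outside that theorem is the singleton characterization of strict differentiability for Lipschitz functions.
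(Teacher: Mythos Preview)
Your proposal is correct and follows essentially the same route as the paper: reduce to the equality case of Theorem~\ref{cla} via the regularity and singleton-gradient consequences of strict differentiability, then read off \eqref{cla-sd}. Your argument is in fact more explicit than the paper's on two points the paper leaves implicit: the verification of $w^*$-scalar measurability of $\partial^\circ\ph(\ox,\cdot)$ (which the paper handles in the discussion preceding Theorem~\ref{cla} rather than in the proof of the corollary), and the appeal to the converse direction of \cite[Proposition~2.2.4]{cl83} to pass from the singleton $\partial^\circ I_\ph(\ox)$ back to strict differentiability of $I_\ph$ at $\ox$.
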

\begin{proof} We have from the above that in the setting of this corollary all the assumptions of Theorem~\ref{cla} are satisfied and the equality holds in \eqref{cl-inc} with $\partial^\circ\varphi(\ox,\omega)=\{\nabla\varphi(\ox,\omega)\}$ for every $\omega\in\O$. It remains to mention that the $w^*$-integral in \eqref{cl-inc} reduces to the Gelfand one for single-valued integrands.
\end{proof}

Next we discuss effective conditions for the validity of the $\mathit{w}^*$-\hspace{0pt}scalar measurability assumption on the mapping $\partial^\circ\varphi(\bar{x},\cdot)$ imposed in Theorem~\ref{cla}.

\begin{Proposition}[\bf $w^*$-scalar measurability of the generalized gradient]\label{gg-mes} Assume that $\mathrm{dens}(X)<\kappa(\mu)$ in the setting of Theorem~{\rm\ref{cla}}. Then the generalized gradient mapping $\partial^\circ\varphi(\bar{x},\cdot)$ is $\mathit{w}^*$-\hspace{0pt}scalarly measurable.
\end{Proposition}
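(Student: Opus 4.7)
The plan is, for arbitrary fixed $h\in X$, to prove that the scalar function $\omega\mapsto\varphi^\circ(\bar{x},\omega;h)$ is measurable; by the support-function identity \eqref{supp-g} this is equivalent to the $w^*$-scalar measurability of $\partial^\circ\varphi(\bar{x},\cdot)$. The first step rewrites the limsup in \eqref{dd} as the countable infimum of local suprema
$$
\varphi^\circ(\bar{x},\omega;h)=\inf_{m\ge m_0}G_m(\omega),\qquad G_m(\omega):=\sup_{x\in B(\bar{x},1/m),\;\theta\in(0,1/m)}\frac{\varphi(x+\theta h,\omega)-\varphi(x,\omega)}{\theta},
$$
where $m_0$ is large enough that $x+\theta h\in U$ whenever $x\in B(\bar{x},1/m_0)$ and $\theta\in(0,1/m_0)$; the task thus reduces to showing each $G_m$ is measurable. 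I would then pick a dense subset $\{x_\alpha\}_{\alpha<\mathrm{dens}(X)}$ of $U$ and a countable dense subset $\{\theta_j\}_{j\in\N}$ of $(0,1)$ and, using the joint continuity of the difference quotient in $(x,\theta)$ on $B(\bar{x},1/m)\times(0,1/m)$ that is inherited from the uniform integrable Lipschitz modulus of $\varphi(\cdot,\omega)$ on $U$, replace the full sup by the supremum over the restricted family of indices with $\|x_\alpha-\bar{x}\|<1/m$ and $\theta_j<1/m$. Each such difference quotient $\omega\mapsto(\varphi(x_\alpha+\theta_jh,\omega)-\varphi(x_\alpha,\omega))/\theta_j$ is measurable in view of the assumption that $\varphi(x,\cdot)$ is measurable for every $x\in U$.

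The main obstacle is that the resulting family of measurable functions has cardinality at most $\mathrm{dens}(X)\cdot\aleph_0$, which may still be uncountable, and pointwise suprema over uncountable families of measurable functions are in general not measurable. Here the hypothesis $\mathrm{dens}(X)<\kappa(\mu)$ is decisive, exactly in the spirit of the proof of Proposition \ref{dens}. I would pass to the essential supremum $\tilde G_m$ of the restricted family: $\tilde G_m$ is measurable by its standard construction as the pointwise supremum of a suitable countable subfamily. For each admissible index $(\alpha,j)$ the set on which the corresponding quotient strictly exceeds $\tilde G_m$ is $\mu$-null, and the union of all these null sets is indexed by fewer than $\kappa(\mu)$ parameters, so $\kappa(\mu)$-additivity forces this union to lie in $\Sigma$ and to be $\mu$-null. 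Off this single exceptional set $\tilde G_m$ dominates every member of the family and therefore coincides with $G_m$, and completeness of $(\Omega,\Sigma,\mu)$ transfers measurability from $\tilde G_m$ to $G_m$.

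Taking the countable infimum over $m\ge m_0$ yields the measurability of $\varphi^\circ(\bar{x},\cdot;h)$; since $h\in X$ was arbitrary, \eqref{supp-g} delivers the claimed $w^*$-scalar measurability of $\partial^\circ\varphi(\bar{x},\cdot)$.
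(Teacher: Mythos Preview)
Your proof is correct and follows essentially the same route as the paper's: reduce to measurability of $\omega\mapsto\varphi^\circ(\bar{x},\omega;h)$, rewrite the limsup as a countable limit of local suprema, replace each full supremum by a supremum over a dense family of size at most $\mathrm{dens}(X)\cdot\aleph_0<\kappa(\mu)$ using the continuity of the difference quotient, and then invoke $\kappa(\mu)$-additivity. The only real difference is that the paper simply cites \cite[Proposition~521B]{fr08} for the measurability of a supremum of fewer than $\kappa(\mu)$ measurable functions, whereas you prove this in-house via the essential supremum and the union-of-null-sets argument; your version is thus slightly more self-contained, while the paper's is a bit shorter.
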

\begin{proof} As discussed, it suffices to show that the measurability of the generalized directional derivative $\omega\mapsto\varphi^\circ(\bar{x},\omega;h)$. To proceed, pick a dense subset $\{x_\alpha\}_{\alpha<\mathrm{dens}(X)}$ of $B_X$ and a countable dense set $\{\theta_k\}$ in $\R$, and then choose a sequence of $\varepsilon_k\downarrow 0$ as $k\to\infty$. It follows from the density of $\{x_\alpha\}_{\alpha<\mathrm{dens}(X)}$ and $\{\theta_k\}$ and from the continuity of $\varphi(\cdot,\omega)$ that
\begin{eqnarray*}
\begin{array}{ll}
\varphi^\circ(\bar{x},\omega;h)& =\disp\lim_{\varepsilon\downarrow 0}\Big[\sup_{x\in\bar{x}+\varepsilon B_X,\,\theta\in (0,\varepsilon)}\disp\frac{\varphi(x+\theta h,\omega)-\varphi(x,\omega)}{\theta}\Big]\\
&=\disp\lim_{k\to\infty}\bigg[\sup_{\begin{subarray}{c}\alpha<\mathrm{dens}(X)\\n\in\N:\theta_n\in(0,\varepsilon_k) \end{subarray}}\frac{\varphi((\bar{x}+\varepsilon_k x_\alpha)+\theta_n h,\omega)-\varphi(\bar{x}+\varepsilon_k x_\alpha,\omega)}{\theta_n}\bigg],
\end{array}
\end{eqnarray*}
where the scalar function
$$
\disp\omega\longmapsto\sup_{\begin{subarray}{c}\alpha<\mathrm{dens}(X)\\n\in\N:\theta_n\in(0,\varepsilon_k) \end{subarray}}\frac{\varphi((\bar{x}+\varepsilon_k x_\alpha)+\theta_n h,\omega)-\varphi(\bar{x}+\varepsilon_k x_\alpha,\omega)}{\theta_n}
$$
is measurable for each $k\in\N$ due to $\mathrm{dens}(X)\times \aleph_0<\kappa(\mu)\times\aleph_0=\kappa(\mu)$; cf.\ \cite[Proposition~521B]{fr08}. Thus it shows that the function $\omega\mapsto\varphi^\circ(\bar{x},\omega;h)$ is measurable as the pointwise limit of a sequence of measurable functions.
\end{proof}

Next we proceed with establishing the extensions of the Leibniz rule \eqref{lem} from \cite[Lemma~6.18]{mo06} outlined in Section~1. Our standing setting in this part is the class of {\em Asplund} spaces $X$, which may be nonreflexive and nonseparable. Given a function $\phi\colon X\to\oR$ locally Lipschitzian around $\ox$, recall that the {\em limiting subdifferential} (known also as the basic, general, or Mordukhovich one) of $\phi$ at $\ox$ is defined by
\begin{eqnarray}\label{bs}
\begin{array}{ll}
\partial\phi(\ox):=\Big\{x^*\in X^*\Big|&\exists\,\mbox{ sequences }\;x_k\to\ox,\;x^*_k\st{w^*}{\to}x^*\;\mbox{ such that}\\
&\disp\liminf_{x\to x_k}\frac{\phi(x)-\phi(x_k)-\la x^*_k,x-x_k\ra}{\|x-x_k\|}\ge 0\Big\}.
\end{array}
\end{eqnarray}

In \cite{mo06a}, the reader can find a comprehensive theory for the subdifferential \eqref{bs} in Asplund (and partly in general Banach) spaces with its various applications in \cite{mo06}. Note that $\partial\phi(\ox)\ne\emp$ for every locally Lipschitzian function on an Asplund space, and in fact this property is a characterization of Asplund spaces; see \cite[Corollary~3.25]{mo06a}. We have $\partial\phi(\ox)=\{\nabla\phi(\ox)\}$ when $\phi$ is strictly differentiable at $\ox$, and \eqref{bs} reduces to the subdifferential of convex analysis for convex functions $\phi$. But it may be heavily nonconvex even for simplest nonconvex functions on $\R$ as, e.g., in the case of $\phi(x)=-|x|$ where $\partial\phi(0)=\{-1,1\}$. Furthermore, it follows from \cite[Theorem~3.57]{mo06} that
\begin{eqnarray}\label{mor1}
\partial^\circ\phi(\bar{x})=\overline{\mathrm{co}}^{\,\mathit{w}^*}\partial\phi(\bar{x})
\end{eqnarray}
whenever $\phi$ is locally Lipschitzian around $\ox$ on an Asplund space $X$. It is worth mentioning here that in spite of (in fact due to) its nonconvexity the limiting subdifferential \eqref{bs} enjoys {\em full calculus} on Asplund spaces that is based on {\em variational/extremal principles} of variational analysis; see \cite{mo06a}.

Consider now the Gelfand integral \eqref{eq0} of the nonconvex subdifferential mapping $\partial\varphi(x,\cdot)\colon\Omega\rightrightarrows X^*$ given by
$$
I\big(\partial\varphi(x,\cdot)\big)=\int\partial\varphi(x,\omega)d\mu,
$$
where the notation $\partial\varphi(x,\omega)$ stands for $\partial(\varphi(\cdot,\omega))(x)$.

The following result is an extension of \cite[Lemma~6.18]{mo06} from reflexive and separable to general Asplund spaces $X$ and from Bochner to Gelfand integration with respect to complete nonatomic measures.

\begin{Theorem}[\bf limiting subdifferential of integral functionals in Asplund spaces]\label{thm2} Let $(\Omega,\Sigma,\mu)$ be a complete and nonatomic finite measure space, and let $X$ be an Asplund space with $\mathrm{dens}(X)<\kappa(\mu)$. Let $\varphi:X\times\Omega\to\oR$ be an integrand in \eqref{int-f} such that the function $\varphi(\cdot,\omega)$ is Lipschitz continuous on a neighborhood $U$ of $\bar{x}$ for every $\omega\in\Omega$ with a uniformly on $U$ integrable Lipschitz modulus, while the function $\varphi(x,\cdot)$ is measurable for every $x\in U$ and such that the mapping $\partial\varphi(\bar{x},\cdot)$ is $\mathit{w}^*$-scalarly measurable and Gelfand integrable. Then we have the inclusion
\begin{eqnarray}\label{lim-inc}
\partial I_\varphi(\bar{x})\subset\overline{\int\partial\varphi(\bar{x},\omega)d\mu}^{\,\mathit{w}^*},
\end{eqnarray}
which holds as equality provided that $\ph(\cdot,\omega)$ is regular at $\ox$ for all $\omega\in\O$.
\end{Theorem}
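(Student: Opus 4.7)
The plan is to reduce the statement to the generalized-gradient counterpart in Theorem~\ref{cla} and then translate the Clarke integral back to the limiting subdifferential via Theorem~\ref{thm1}. First I would verify that the multifunction $\Gamma(\omega):=\partial\varphi(\bar x,\omega)$ fits the framework of Theorem~\ref{thm1}: completeness, nonatomicity and finiteness of $\mu$, the Asplund property of $X$, and the density condition $\mathrm{dens}(X)<\kappa(\mu)$ come from the hypotheses; $\mathit{w}^*$-scalar measurability and Gelfand integrability of $\Gamma$ are assumed; the values are $\mathit{w}^*$-closed by \eqref{bs}; and integrable boundedness follows because the uniform integrable Lipschitz modulus $k(\omega)$ of $\varphi(\cdot,\omega)$ on $U$ majorizes $\|x^*\|$ for every $x^*\in\partial\varphi(\bar x,\omega)$. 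Theorem~\ref{thm1} therefore yields
$$
\overline{\int\partial\varphi(\bar x,\omega)\,d\mu}^{\,\mathit{w}^*}=I\!\left(\overline{\mathrm{co}}^{\,\mathit{w}^*}\partial\varphi(\bar x,\cdot)\right).
$$

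Next I would apply Theorem~\ref{cla} to the same integrand $\varphi$. The $\mathit{w}^*$-scalar measurability of $\partial^\circ\varphi(\bar x,\cdot)$ required there is inherited from that of $\partial\varphi(\bar x,\cdot)$, since by \eqref{mor1} the support function is invariant under $\overline{\mathrm{co}}^{\,\mathit{w}^*}$, so $s(h,\partial^\circ\varphi(\bar x,\omega))=s(h,\partial\varphi(\bar x,\omega))$ is measurable in $\omega$ for every $h\in X$. Moreover, the density condition together with Proposition~\ref{dens} and the inclusion \eqref{gel-rel} identifies the $w^*$-integral of the convex, $\mathit{w}^*$-compact-valued multifunction $\partial^\circ\varphi(\bar x,\cdot)=\overline{\mathrm{co}}^{\,\mathit{w}^*}\partial\varphi(\bar x,\cdot)$ with its Gelfand integral. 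Combining the always-valid inclusion $\partial I_\varphi(\bar x)\subset\partial^\circ I_\varphi(\bar x)$ with Theorem~\ref{cla} and the identity above, I obtain
$$
\partial I_\varphi(\bar x)\subset\partial^\circ I_\varphi(\bar x)\subset\int\overline{\mathrm{co}}^{\,\mathit{w}^*}\partial\varphi(\bar x,\omega)\,d\mu=\overline{\int\partial\varphi(\bar x,\omega)\,d\mu}^{\,\mathit{w}^*},
$$
which is precisely \eqref{lim-inc}.

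For the equality under regularity I would note that pointwise regularity of $\varphi(\cdot,\omega)$ collapses $\partial^\circ\varphi(\bar x,\omega)$ onto $\partial\varphi(\bar x,\omega)$ (both sides being described by the same support function \eqref{supp-g}), so the integrand is already $\mathit{w}^*$-compact and convex valued and, by Theorem~\ref{thm1}, its Gelfand integral is $\mathit{w}^*$-closed. The regularity clause of Theorem~\ref{cla} simultaneously produces regularity of $I_\varphi$ at $\bar x$ and the equality $\partial^\circ I_\varphi(\bar x)=\int\partial^\circ\varphi(\bar x,\omega)\,d\mu$; regularity of $I_\varphi$ in turn forces $\partial I_\varphi(\bar x)=\partial^\circ I_\varphi(\bar x)$ through \eqref{mor1}, so the outer closure in \eqref{lim-inc} becomes redundant. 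The main obstacle I anticipate is the bookkeeping around Gelfand versus $w^*$-integrals for the convexified multifunction: the density condition must be invoked at the right place so that Proposition~\ref{dens} identifies these two integrals, and Theorem~\ref{thm1}'s convexification identity must be applied to $\partial\varphi(\bar x,\cdot)$ rather than to its convex hull; once these identifications are secured, the rest is a short chase through \eqref{mor1}, Theorem~\ref{cla}, and Theorem~\ref{thm1}.
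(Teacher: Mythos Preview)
Your proposal is correct and follows essentially the same route as the paper: reduce to Theorem~\ref{cla} for the Clarke generalized gradient, transfer the $\mathit{w}^*$-scalar measurability between $\partial\varphi(\bar x,\cdot)$ and $\partial^\circ\varphi(\bar x,\cdot)$ via the support-function identity from \eqref{mor1}, identify the $w^*$-integral with the Gelfand integral through Proposition~\ref{dens} under the density condition, and close the loop with Theorem~\ref{thm1}. The equality case is handled identically; your remark that the closure becomes redundant is a harmless extra observation, and the only cosmetic point is that $\mathit{w}^*$-closedness (indeed $\mathit{w}^*$-compactness) of $\partial\varphi(\bar x,\omega)$ is better cited from \cite[Corollary~1.81]{mo06a} than from the raw definition \eqref{bs}.
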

\begin{proof}
As shown in the proof of Theorem~\ref{cla}, the integral functional $I_\ph$ in \eqref{int-f} is locally Lipschitzian around $\ox$ under the assumptions made. Moreover, the local Lipschitz continuity of $\ph(\cdot,\omega)$ with a uniformly integrable modulus implies the uniform boundedness of $\partial\ph(\cdot,\omega)$ on $U$ and hence the integrable boundedness of the multifunction $\partial\varphi(\bar{x},\cdot)$ with $\mathit{w}^*$-\hspace{0pt}compact values; see \cite[Corollary~1.81]{mo06a}. It follows from \eqref{mor1} that
$$
s\big(h,\partial\varphi(\bar{x},\omega)\big)=s\big(h,\overline{\mathrm{co}}^{\,\mathit{w}^*}\partial\varphi(\bar{x},\omega)\big)
=s\big(h,\partial^\circ\varphi(\bar{x},\omega)\big)
$$
for every $h\in X$ and $\omega\in\Omega$, which implies the $\mathit{w}^*$-\hspace{0pt}scalar measurability of $\partial\varphi(\bar{x},\cdot)$ and that of $\partial^\circ\varphi(\bar{x},\cdot)$ are equivalent. Further, the Gelfand integrability of $\partial\varphi(\bar{x},\cdot)$ yields that of $\partial^\circ\varphi(\bar{x},\cdot)$, which verifies the inclusion \eqref{cl-inc} by Theorem~\ref{cla}. On the other hand, we have by Theorem~\ref{thm1} under the imposed nonatomicity assumption that $I(\overline{\mathrm{co}}^{\,\mathit{w}^*}\partial\varphi(\bar{x},\cdot))=\overline{I(\partial\varphi(\bar{x},\cdot))}^{\,\mathit{w}^*}$. Combining this with \eqref{eq1} and \eqref{mor1} leads us to the relationships
\begin{eqnarray*}
\begin{array}{ll}
\partial I_\varphi(\bar{x})\subset\partial^\circ I_\varphi(\bar{x})&\subset\disp w^*\mbox{-}\int\partial^\circ\varphi(\bar{x},\omega)d\mu
=\disp w^*\mbox{-}\int\overline{\mathrm{co}}^{\,\mathit{w}^*}\partial\varphi(\bar{x},\omega)d\mu\\
&=I\big(\overline{\mathrm{co}}^{\,\mathit{w}^*}\partial\varphi(\bar{x},\cdot)\big)=\overline{I(\partial\varphi(\bar{x},\cdot))}^{\,\mathit{w}^*}
=\disp\overline{\int\partial\varphi(\bar{x},\omega)d\mu}^{\,\mathit{w}^*},
\end{array}
\end{eqnarray*}
which justifies the claimed inclusion \eqref{lim-inc}. To justify the equality therein under the regularity of $\ph(\cdot,\omega)$ at $\ox$, we can easily derive from the definitions and \eqref{mor1} that this regularity yields $\partial^\circ\ph(\ox,\omega)=\partial\ph(\ox;\omega)$ for every $\omega\in\O$. Using now the equality in \eqref{cl-inc} and the regularity of $I_\ph$ at $\ox$ established in Theorem~\ref{cla} together with the equalities obtained above in the proof of this theorem, we arrive at the relationships
\begin{eqnarray*}
\partial I_\ph(\ox)=\partial^\circ I_\ph(\ox)=\disp w^*\mbox{-}\int\partial^\circ\varphi(\bar{x},\omega)d\mu=\disp\overline{\int\partial\varphi(\bar{x},\omega)d\mu}^{\,\mathit{w}^*},
\end{eqnarray*}
which verify the equality in \eqref{lim-inc} under the regularity of $\ph(\cdot,\omega)$ at $\ox$.
\end{proof}

As mentioned above, the $\mathit{w}^*$-\hspace{0pt}scalar measurability of $\partial\varphi(\bar{x},\cdot)$ assumed in Theorem~\ref{thm2} is equivalent to the one for $\partial^\circ\varphi(\bar{x},\cdot)$, and hence it holds under the validity of the condition $\mathrm{dens}(X)<\kappa(\mu)$ in Proposition~\ref{gg-mes}; in particular, when the space $X$ is separable.\vspace*{0.05in}

The next consequence of Theorem~\ref{thm2} as well as some previous results presented above establishes a Leibniz-type rule for subdifferentiation of integral functionals without convexification.

\begin{Corollary}[\bf unconvexified Leibniz rule for subdifferentiation of integral functionals]\label{cor1} Let $(\Omega,\Sigma,\mu)$ be a complete and saturated finite measure space, let $X$ be a separable Asplund space, and let the integrand $\varphi\colon X\times\Omega\to\oR$ in \eqref{int-f} satisfy the general assumptions of Theorem~{\rm\ref{thm2}}, where the Gelfand integrability of $\partial\ph(\ox,\cdot)$ is automatic. Then we have
\begin{eqnarray}\label{inc-satur}
\partial I_\varphi(\bar{x})\subset\disp\int\partial\varphi(\bar{x},\omega)d\mu=\disp\int\partial^\circ\varphi(\bar{x},\omega)d\mu,
\end{eqnarray}
where the Gelfand integral of the subdifferential mappings is equivalent to the $w^*$-integral and to the Bochner one, and where the inclusion in \eqref{inc-satur} holds as equality provided that $\ph(\cdot,\omega)$ is regular at $\ox$ for every $\omega\in\O$.
\end{Corollary}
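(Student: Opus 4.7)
The plan is to combine Theorem~\ref{thm2} (which delivers the inclusion modulo a $w^*$-closure) with Proposition~\ref{saturation} (which eliminates that closure under saturation).

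First I would verify that every hypothesis of Theorem~\ref{thm2} is automatic here. Separability of $X$ yields $\mathrm{dens}(X)=\aleph_0<\aleph_1\le\kappa(\mu)$, so the density-additivity requirement is free; Proposition~\ref{gg-mes} then supplies $w^*$-scalar measurability of $\partial^\circ\varphi(\ox,\cdot)$, which via \eqref{mor1} is equivalent to that of $\partial\varphi(\ox,\cdot)$. Exactly as in the proof of Theorem~\ref{thm2}, the uniformly integrable Lipschitz modulus makes $\partial\varphi(\ox,\cdot)$ integrably bounded with $w^*$-compact values, and Proposition~\ref{thm:ckr2} then yields its Gelfand integrability; this is why the Gelfand integrability hypothesis of Theorem~\ref{thm2} can be dropped from the present statement. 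Applying Theorem~\ref{thm2} produces $\partial I_\varphi(\ox)\subset\overline{I(\partial\varphi(\ox,\cdot))}^{\,w^*}$.

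Second, I would invoke Proposition~\ref{saturation} with $\Gamma=\partial\varphi(\ox,\cdot)$, which satisfies its hypotheses by the previous step; this is the only place where saturation enters. The proposition asserts that $I(\partial\varphi(\ox,\cdot))$ is itself $w^*$-compact and convex and equals $I(\overline{\mathrm{co}}^{\,w^*}\partial\varphi(\ox,\cdot))$. Combining this with the pointwise identity $\overline{\mathrm{co}}^{\,w^*}\partial\varphi(\ox,\omega)=\partial^\circ\varphi(\ox,\omega)$ from \eqref{mor1}, I obtain in one stroke that the $w^*$-closure in Theorem~\ref{thm2} is redundant and that the equality in \eqref{inc-satur} holds, which settles the first inclusion. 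The regularity case is then immediate: Theorem~\ref{thm2} yields equality in \eqref{lim-inc} under regularity of $\varphi(\cdot,\omega)$ at $\ox$, and that equality transfers to \eqref{inc-satur} once the closure has been removed.

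Finally, the parenthetical equivalence of the Gelfand, $w^*$-, and Bochner integrals is essentially free. Relation \eqref{gel-rel} together with Proposition~\ref{dens} yields equality of the Gelfand and $w^*$-integrals of the convex-valued multifunction $\partial^\circ\varphi(\ox,\cdot)$, while the separable-Asplund coincidence of Gelfand- and Bochner-integrable selectors recorded in Remark~\ref{rem1} transports pointwise to the set-valued integrals defined via selectors. I do not anticipate a real obstacle here: the argument is bookkeeping once the ingredients are assembled. The only delicate point is to apply Proposition~\ref{saturation} directly to the nonconvex-valued mapping $\partial\varphi(\ox,\cdot)$ itself, rather than to its $w^*$-convexified version, so that saturation simultaneously convexifies and compactifies in a single stroke — trying instead to remove the closure on the right-hand side of Theorem~\ref{thm2} a posteriori would be noticeably more awkward.
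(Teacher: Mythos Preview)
Your proposal is correct and follows essentially the same route as the paper: combine Proposition~\ref{saturation} with (the proof of) Theorem~\ref{thm2} to drop the $w^*$-closure and obtain $I(\partial\varphi(\ox,\cdot))=I(\overline{\mathrm{co}}^{\,w^*}\partial\varphi(\ox,\cdot))=I(\partial^\circ\varphi(\ox,\cdot))$, then invoke separability and Remark~\ref{rem1} for the coincidence of the Gelfand, $w^*$-, and Bochner integrals. Your write-up is in fact more explicit than the paper's, spelling out why the Gelfand integrability of $\partial\varphi(\ox,\cdot)$ is automatic (via integrable boundedness and Proposition~\ref{thm:ckr2}) and why the density condition is free, whereas the paper simply asserts these points.
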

\begin{proof}
Both conclusions in \eqref{inc-satur} as well as the equality statement under regularity of $\ph(\cdot,\omega)$ at $\ox$ follow from the combination of Proposition~\ref{saturation} and the proof of Theorem~\ref{thm2}. As mentioned above, the Gelfand integral in \eqref{inc-satur} agrees with the $w^*$-integral due to the separability of $X$, while both of these integrals of the subdifferential mappings reduce to the Bochner integral in separable Asplund spaces by the discussions in Remark~\ref{rem1}.
\end{proof}

\begin{Remark}[\bf separable reduction in Asplund spaces]\label{sep-red}
{\rm There is the method of {\em separable reduction} in theory of Asplund spaces, which provides the possibility to elevate results involving the so-called ``Fr\'echet subdifferential" (known also as the regular or viscosity subdifferential, or the presubdifferential in variational analysis) of $\phi\colon X\to\oR$ at $\ox$ defined by
\begin{eqnarray*}
\Hat\partial\phi(\ox):=\Big\{x^*\in X^*\Big|\;\disp\liminf_{x\to\ox}\frac{\phi(x)-\phi(\ox)-\la x^*,x-\ox\ra}{\|x-\ox\|}\ge 0\Big\}
\end{eqnarray*}
from separable to general Asplund spaces. It has been used to extend a number of important results of generalized differentiation in the Fr\'echet sense established in separable Asplund spaces to arbitrary Asplund spaces without any separability-like assumptions. The reader can find more information on the usage of this method, largely developed by Mari\'an Fabian, in \cite[Section~2.2.2]{mo06a} that is based on \cite{fm02}; see also \cite{cf16} for the recent developments. Remembering definition \eqref{bs} of the limiting subdifferential, we {\em conjecture} that the results of Theorem~\ref{thm2} and Corollary~\ref{cor1} related to separable Asplund spaces $X$ can be extended to the arbitrary Asplund space setting by implementing the method of separable reduction. More precisely, we conjecture that the $\mathit{w}^*$-\hspace{0pt}scalar measurability assumption on $\partial\varphi(\bar{x},\cdot)$ imposed in Theorem~\ref{thm2} and satisfied in separable Asplund spaces, can be avoided. Proceeding in this way, we conjecture that the separability assumption on $X$ can be dismissed (or seriously weakened) in Corollary~\ref{cor1} for the Gelfand integral of the limiting subdifferential mapping. It should be mentioned to this end that, besides subdifferential separable reduction elaborations, more work is needed regarding the saturation property. Indeed, Proposition~\ref{saturation} used in the proof of Corollary~\ref{cor1} assumes separability. It has been shown quite recently \cite{ks15a} that the Lyapunov theorem holds in dual Banach spaces without any separability assumptions but with a certain higher-order strengthening of the saturation property. It is an interesting open question about the possibility to remove or relax the latter assumption in the nonseparable framework of Corollary~\ref{cor1} while combining with the method of separable reduction.}
\end{Remark}

\section{Bellman Equation for Problems of Stochastic Dynamic Programming in Banach Spaces}

As mentioned in Section~1, our intention in this paper is to give applications of the obtained results on subdifferentiation of integral functionals to some models of stochastic dynamic programming (DP) governed  by discrete-time systems on the infinite horizon and mainly related to the {\em Markov decision process} in which current random shocks affect a probability measure over random shocks next period as well as a cost function and a feasible set. Models of this type have been investigated in finite-dimensional spaces with various applications to economic dynamics; see, e.g., \cite{slp89} and the references therein together with further references to be presented below.

In this section we describe the basic stochastic DP model in Banach spaces and derive the corresponding Bellman equation for the optimal value function. The form of the obtained equation gives us the opportunity to employ in the next section the subdifferential results for the integral functionals established in Section~3 in order to conduct a local sensitivity analysis and derive necessary optimality conditions in stochastic DP.

To begin with the description of the random environment of the model, let $P$ be a \textit{stochastic kernel} (or a \textit{transition probability measure}) on a measurable space $(\Omega,\Sigma)$, i.e., $P\colon\Sigma\times\Omega\to[0,1]$ is such that $P(\cdot\mid\omega)$ is a probability measure on $\Sigma$ for every $\omega\in\Omega$ and that $P(A\mid\cdot)$ is a measurable function on $\Omega$ for every $A\in\Sigma$. Denote by $\omega^t=(\omega_1,\dots,\omega_t)$, $t=1,2,\ldots$, a finite sequence of random shocks in the product space $\Omega^t$ equipped with the product $\sigma$-\hspace{0pt}algebra $\Sigma^t$. Given an initial shock $\omega_0\in\Omega$, let $P^t(\cdot\mid \omega_0)$ be the probability measure on $\Sigma^t$ constructed from the iterated integrals of the stochastic kernel $P$ satisfying for every measurable rectangle $A_1\times\ldots\times A_t\in \Sigma^t$ with its probability defined for every $t=1,2,\ldots$ by
\begin{eqnarray*}
\begin{array}{ll}
&P^t(A_1\times\ldots\times A_t\mid\omega_0):\\
=&\disp\int_{A_1}\int_{A_{2}}\ldots\int_{A_{t}}P(d\omega_t\mid\omega_{t-1})P(d\omega_{t-1}\mid\omega_{t-2})\ldots P(d\omega_1\mid\omega_0).
\end{array}
\end{eqnarray*}

Let $X$ be an {\em action space}, which is assumed to be Banach in what follows. At each period a decision maker possesses a {\em random cost function} $u:X\times X\times\Omega\to\R$ and discounts the sum of expected costs with a constant rate $\beta\in[0,1)$. {\em Feasible policies} are determined by a {\em random constraint mapping} $\Gamma\colon X\times\Omega\rightrightarrows X$. Given an initial condition $(x,\omega)\in X\times\Omega$, we consider the {\em stochastic DP} problem on the {\em infinite horizon} described by:
\begin{eqnarray}\label{P}
\begin{array}{ll}
&\disp\inf\left\{u(x_0,x_1,\omega_0)+\disp\sum_{t=1}^\infty\disp\int\beta^tu(x_t,x_{t+1},\omega_t)P^t(d\omega^t\mid\omega_0)\right\}\\
&\qquad\mbox{s.t. }\;x_{t+1}\in\Gamma(x_t,\omega_t)\;\mbox{ for every }\;t=0,1,\ldots,\\
&\qquad(x_0,\omega_0)=(x,\omega)\in X\times\Omega.
\end{array}
\end{eqnarray}
Denoting by $v(x,\omega)$ the infimum value in problem \eqref{P} and ranging the initial condition $(x,\omega)$ in $v(x,\omega)$ over the whole domain $X\times\Omega$ gives us the (random) \textit{optimal value function} $v\colon X\times\Omega\to\overline{\R}$. The assumptions made below ensure that the defined value function $v(x,\omega)$ is actually real-\hspace{0pt}valued.

In accordance with the choice of topologies on $X$, two Borel $\sigma$-\hspace{0pt}algebras arise in $X$: $\mathrm{Borel}(X,\|\cdot\|)$ is the Borel $\sigma$-\hspace{0pt}algebra with respect to the norm topology and $\mathrm{Borel}(X,\mathit{w})$ is the Borel $\sigma$-\hspace{0pt}algebra with respect to the weak topology. As well known (see, e.g., \cite[p.\,67]{th75}), in the case of separable Banach spaces $X$ considered below these two notions coincide. Having this in mind, we say that the multifunction $\Gamma:X\times\Omega\rightrightarrows X$ is \textit{measurable} if for every open set $U\subset X$ in the norm topology the {\em inverse image} (or preimage)
$$
\Gamma^{-1}(U):=\{(x,\omega)\in X\times\Omega\mid\Gamma(x,\omega)\cap U\ne\emp\}
$$
of $U$ under $\Gamma$ belongs to the $\sigma$-algebra $\mathrm{Borel}(X,\|\cdot\|)\otimes\Sigma$.\vspace*{0.05in}

For our subsequent study we impose the following {\em standing assumptions} on the initial data of the stochastic DP problem \eqref{P}:\vspace*{0.05in}

{\bf (H1)} The cost function  $u:X\times X\times\Omega\to\R$ is bounded and such that $u(\cdot,\cdot,\omega)$ is continuous (resp.\ weakly continuous) on $X\times X$ for every $\omega\in\Omega$ while $u(x,y,\cdot)$ is measurable on $\Omega$ for every $(x,y)\in X\times X$.

{\bf (H2)} The constraint mapping $\Gamma:X\times\Omega\rightrightarrows X$ is measurable with compact (resp.\ weakly compact) values and such that $\Gamma(\cdot,\omega):X\rightrightarrows X$ is continuous (resp.\ weakly continuous) for every $\omega\in\Omega$ while $\Gamma(x,\cdot):\Omega\rightrightarrows X$ is measurable for every $x\in X$.\vspace*{0.05in}

Denote by $\C_b(X\times \Omega)$ (resp.\ $\C_b(X_w\times \Omega)$) the space of bounded functions $\phi\colon X\times\Omega\to\R$ endowed with the supremum norm
$$
\|\phi\|:=\disp\sup_{(x,\omega)\in X\times\Omega}|\phi(x,\omega)|
$$
such that $x\mapsto\phi(x,\omega)$ is continuous (resp.\ weakly continuous) for every $\omega\in\Omega$ while $\omega\mapsto\phi(x,\omega)$ is measurable for every $x\in X$. Since weak continuity implies norm continuity, we have that $\C_b(X_w\times\Omega)\subset\C_b(X\times\Omega)$ and that $\phi\in\C_b(X_w\times\Omega)$ is a Carath\'eodory function with respect to the norm topology of $X$. Hence it is $\mathrm{Borel}(X,\|\cdot\|)\otimes\Sigma$-\hspace{0pt}measurable whenever $X$ is separable; see \cite[Lemma~4.51]{ab06}. This implies that $u(x,y,\omega)$ is jointly measurable under the assumptions in (H1), (H2) and the separability of $X$.\vspace*{0.05in}

The next major result shows that the (optimal) value function $v(x,\omega)$ is a {\em unique} Carath\'eodory-type solution to the integral {\em Bellman equation} for the stochastic DP problem \eqref{P} involving the pointwise {\em minimization} over the constraint set $\Gamma(x,\omega)$. Note that the strong (resp.\ weak) continuity of the solution in the following theorem is in accordance with the strong (resp.\ weak) continuity assumptions in (H1) and (H2). Observe also that the form of the Bellman equation below corresponds to the discrete-time system in \eqref{P}, while its continuous-time counterpart relates to the so-called ``viscosity solutions" of the {\em Hamilton-Jacobi} (or Hamilton-Jacobi-Bellman) partial differential equation; see, e.g., \cite{fs93} with the references therein.

\begin{Theorem} [\bf integral Bellman equation in stochastic DP]\label{bell-eq} Let $X$ be a separable Banach space, and let the assumptions in {\rm(H1)} and {\rm(H2)} hold. Then there exists a unique function $v$ in $\C_b(X\times\Omega)$ {\rm (}resp.\ in $\C_b(X_w\times\Omega)${\rm )} satisfying the following Bellman equation for every $(x,\omega)\in X\times\Omega$:
\begin{equation}\label{bell}
v(x,\omega)=\min_{y\in\Gamma(x,\omega)}\left\{u(x,y,\omega)+\beta\int v(y,\omega')P(d\omega'\mid\omega)\right\}.
\end{equation}
\end{Theorem}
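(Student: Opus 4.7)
The plan is to realize the value function $v$ as the unique fixed point of a contraction on the Banach space $\C_b(X\times\Omega)$ (resp.\ $\C_b(X_w\times\Omega)$) equipped with the supremum norm. Define the Bellman operator $T$ by
$$
(T\phi)(x,\omega):=\min_{y\in\Gamma(x,\omega)}\Big\{u(x,y,\omega)+\beta\int\phi(y,\omega')P(d\omega'\mid\omega)\Big\}.
$$
Once $T$ is shown to be a well-defined self-map and a contraction with modulus $\beta$, Banach's fixed point theorem yields a unique $v^*$, and a standard unfolding argument identifies $v^*$ with the value function of \eqref{P}.

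First I would verify the self-mapping property. For any $\phi$ in the ambient space, the map $\omega\mapsto\int\phi(y,\omega')P(d\omega'\mid\omega)$ is measurable because $P$ is a stochastic kernel and $\phi(y,\cdot)$ is bounded measurable, while $y\mapsto\int\phi(y,\omega')P(d\omega'\mid\omega)$ is continuous (resp.\ weakly continuous) by the Lebesgue dominated convergence theorem with dominating constant $\|\phi\|$. Thus the bracketed expression is a Carath\'eodory integrand; combined with $\Gamma(\cdot,\omega)$ being continuous (resp.\ weakly continuous) and (weakly) compact-valued by (H2), Berge's maximum theorem delivers continuity of $(T\phi)(\cdot,\omega)$ with the minimum attained, while the measurable maximum theorem applied to the measurable multifunction $\Gamma(x,\cdot)$ delivers measurability of $(T\phi)(x,\cdot)$. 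Separability of $X$ (hence coincidence of the norm- and weak-Borel structures) guarantees that these classical selection results apply in both variants, and boundedness of $T\phi$ by $\|u\|+\beta\|\phi\|$ is immediate.

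The contraction estimate is straightforward: fixing $(x,\omega)$ and choosing a minimizer $y^*\in\Gamma(x,\omega)$ in the definition of $(T\phi_2)(x,\omega)$, one has
$$
(T\phi_1)(x,\omega)-(T\phi_2)(x,\omega)\le\beta\int[\phi_1(y^*,\omega')-\phi_2(y^*,\omega')]P(d\omega'\mid\omega)\le\beta\|\phi_1-\phi_2\|,
$$
and symmetrization yields $\|T\phi_1-T\phi_2\|\le\beta\|\phi_1-\phi_2\|$. Since $\beta<1$, Banach's theorem provides a unique fixed point $v^*$. To identify $v^*$ with the value function of \eqref{P}, I would iterate the fixed point equation $n$ times, obtaining the representation
$$
v^*(x,\omega)=\inf\Big\{\sum_{t=0}^{n-1}\int\beta^t u(x_t,x_{t+1},\omega_t)P^t(d\omega^t\mid\omega)+\int\beta^n v^*(x_n,\omega_n)P^n(d\omega^n\mid\omega)\Big\},
$$
where the infimum is over measurable feasible policies with $(x_0,\omega_0)=(x,\omega)$; boundedness of $v^*$ forces the last term, bounded by $\beta^n\|v^*\|$, to vanish uniformly as $n\to\infty$, and the remaining sum converges to the total discounted cost in \eqref{P}, yielding $v^*=v$.

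The principal obstacle is the measurability verification for $T\phi$: one must apply the measurable maximum (von Neumann-Aumann) selection theorem to a compact-valued Carath\'eodory multifunction into a separable Banach space, and in the weak-topology variant one additionally needs that the weak topology on bounded subsets of a separable Banach space is metrizable in order to fit the classical framework. The separability hypothesis on $X$ is essential for these measurability arguments to go through, and some care is required to ensure that the measurable selectors produced at each application of $T$ can be combined into a measurable feasible policy when carrying out the final identification step.
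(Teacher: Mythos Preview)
Your proposal is correct and follows essentially the same approach as the paper: define the Bellman operator $T$, verify that it maps $\C_b(X\times\Omega)$ (resp.\ $\C_b(X_w\times\Omega)$) into itself via Berge's maximum theorem for continuity in $x$ and the measurable maximum theorem for measurability in $\omega$, and then invoke the contraction mapping principle. The only cosmetic differences are that the paper packages the contraction estimate by citing Blackwell's theorem rather than writing out the minimizer argument directly, and defers the identification of the fixed point with the value function of \eqref{P} to \cite[Theorem~9.2]{slp89} rather than sketching the finite-horizon unfolding; your concerns about measurability in the weak-topology variant are appropriate but are likewise handled in the paper simply by reference to \cite{ab06}.
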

\begin{proof} Define the {\em Bellman operator} $T$ on $\C_b(X\times\Omega)$ (resp.\ $\C_b(X_w\times\Omega)$) by
\begin{equation}\label{eq2}
(T\phi)(x,\omega):=\min_{y\in\Gamma(x,\omega)}\left\{u(x,y,\omega)+\beta\int\phi(y,\omega')P(d\omega'\mid\omega)\right\}.
\end{equation}
To demonstrate that $T\phi$ belongs to $\C_b(X\times\Omega)$ (resp.\ to $\C_b(X_w\times\Omega)$) for every $\phi$ in $\C_b(X\times\Omega)$ (resp.\ $\C_b(X_w\times\Omega)$), we apply the measurable maximum theorem from \cite[Theorem~18.19]{ab06} to the objective function on the right-\hspace{0pt}hand side in \eqref{eq2}, which is measurable in $(x,y,\omega)$ and continuous (resp.\ weakly continuous) in $(x,y)$, and to the measurable multifunction $\Gamma$ in order to justify the measurability of $T\phi$. Picking then any $\omega\in\Omega$, apply the Berge maximum theorem from \cite[Theorem~17.31]{ab06} to the objective function in \eqref{eq2}, which is continuous (resp.\ weakly continuous) in $(x,y)$, and to the continuous (resp.\ weakly continuous) multifunction $\Gamma(\cdot,\omega)$ with compact (resp.\ weakly compact) values. This allows us to assert that $T\phi$ is continuous (resp.\ weakly continuous) in $x$. Then the Blackwell theorem from \cite[Theorem~3.53]{ab06} tells us that the Bellman operator in \eqref{eq2} is a contraction transformation of $\C_b(X\times\Omega)$ (resp.\ $\C_b(X_w\times\Omega)$) into itself. Hence for every $(x,\omega)\in X\times\O$ this operator admits and a unique fixed point $v=Tv$ solving thus the Bellman equation \eqref{bell}. The verification that $v$ is the optimal value function for the stochastic DP problem \eqref{P} is standard; see \cite[Theorem 9.2]{slp89}.
\end{proof}

Define further the \textit{policy multifunction} $G:X\times\Omega\rightrightarrows X$ by
\begin{eqnarray}\label{policy}
G(x,\omega)=\disp\Big\{y\in\Gamma(x,\omega)\Big|v(x,\omega)=u(x,y,\omega)+\beta\int v(y,\omega')P(d\omega'\mid\omega)\Big\}.
\end{eqnarray}
Employing again the Berge maximum theorem ensures that $G$ is a multifunction with compact (resp.\ weakly compact) values such that $G(\cdot,\omega)$ is upper semicontinuous (resp.\ weakly upper semicontinuous) for every $\omega\in\Omega$. Furthermore, the measurable selection result from \cite[Theorem~18.19]{ab06} tells us that for every $x\in X$ there is a measurable function $g(x,\cdot):\Omega\to X$ such that $g(x,\omega)\in G(x,\omega)$ whenever $\omega\in\Omega$  for the multifunction $G(x,\cdot):\Omega\rightrightarrows X$. Having this in mind, we say that $g:X\times\Omega\to X$ is a \textit{policy function} for \eqref{bell} if $g(x,\cdot)$ is a measurable selector of $G(x,\cdot)$ for every $x\in X$. Given a sample path $\{\omega_t\}_{t=0}^\infty$ in $\Omega$, the policy function $g$ generates an optimal random sequence $\{x_t\}_{t=0}^\infty$ in $X$ constructed recursively by $x_{t+1}:=g(x_t,\omega_t)$ for each $t=0,1,\ldots$ in the original stochastic DP problem \eqref{P}.

\section{Subdifferentiation of the Optimal Value Function in Stochastic Dynamic Programming}

The right-hand side of the Bellman equation \eqref{bell} can be treated as the optimal value function in the deterministic problem of parametric optimization
\begin{eqnarray}\label{mf}
\vt(z):=\disp\inf\big\{\phi(z,p)\big|\;p\in M(z)\big\}.
\end{eqnarray}
Functions of type \eqref{mf}, known also as {\em marginal functions}, have been investigated in variational analysis from the viewpoint of generalized differentiation with numerous applications to optimization, stability, control, and related topics; see, e.g., \cite{cl83,mo06,mn05,mnp12,mny09,t91} and the references therein. In what follows we obtain new results in this direction with taking into account the special structure of \eqref{bell} and implementing the subdifferential formulas for the integral functionals established in Section~3.

We start by establishing subdifferential results for the value function $v(x,\omega)$ in terms of the generalized gradient \eqref{gg} in Banach spaces for which the following local viability conditions on the policy multifunction \eqref{policy}.

\begin{Definition} [\bf local viability]\label{viab} Let $G\colon X\times\O\rightrightarrows X$ be the policy multifunction with $G(\ox,\omega)\ne\emp$ for some $\ox\in X$ and all $\omega\in\O$. We say that:

{\bf (i)} $G$ is {\sc locally lower viable} around $\ox$ if for every $\omega\in\Omega$ there is a neighborhood $U$ of $\ox$ such that $G(x,\omega)\cap\Gamma(x',\omega)\ne\emp$ for every $x,x'\in U$.

{\bf (ii)} $G$  is {\sc locally upper viable} around $\ox$ if for every $\omega\in\Omega$ there is a neighborhood $U$ of $\ox$ such that $G(x,\omega)\subset\Gamma(x',\omega)$ for every $x,x'\in U$.
\end{Definition}

Note that the local lower viability condition in Definition~\ref{viab}(i) is a reasonable extension of the standard interiority condition and allows us to obtain the local Lipschitz continuity of the value function $v(\cdot,\omega)$. The local upper viability property is used below for evaluating the generalized gradient of $v(\cdot,\omega)$ and deriving necessary optimality conditions in problem $(P)$ in its terms; see Theorem~\ref{env}. As shown in Theorem~\ref{mor2}, this assumption is not needed if we employ the limiting subdifferential in Asplund spaces. Observe that the local upper viability holds automatically if $\Gamma$ is independent of $x$. It also holds under the standard interiority condition; see \cite[Lemma~3.1]{alv98}.

It is worth mentioning that the results below require the separability of the space in question, since the Bellman equation in Theorem~\ref{bell-eq} is derived under this assumption. However, the arguments employed in the proofs of the subsequent results do not use separability.\vspace*{0.03in}

To formulate the next theorem, for any given $\omega\in\O$ denote by
$$
\gph G(\cdot,\omega):=\big\{(x,y)\in X\times X\big|\;y\in G(x,\omega)\big\}
$$
the graph of the policy multifunction $G(\cdot,\omega):X\rightrightarrows X$ and by $\partial_x^\circ u(\bar{x},y,\omega)$ the partial generalized gradient \eqref{gg} of $u(\cdot,y,\omega)$ at $\bar{x}\in X$ when $(y,\omega)$ is fixed. The notation $\partial_y^\circ u(x,\bar{y},\omega)$ is similar.

\begin{Theorem} [\bf generalized gradient of the value function in stochastic DP]\label{env} Let $X$ be a separable Banach space, and let $\ox\in X$ be such that $G(\ox,\omega)\ne\emp$ whenever $\omega\in\O$. Assume that the cost function $u(\cdot,\cdot,\omega)$ is locally Lipschitzian around $(\ox,y)$ for every $y\in G(x,\omega)$ with $x$ near $\ox$ and $\omega\in\Omega$ and that the policy multifunction $G$ is locally lower viable around $\ox$. Then the value function $v(\cdot,\omega)$ is locally Lipschitzian around $\ox$ for every $\omega\in\Omega$. If furthermore $G$ is locally upper viable around $\ox$ and if $u(\cdot,\cdot,\omega)$ is regular at $(\bar{x},\bar{y})\in \mathrm{gph}\,G(\cdot,\omega)$ for some $\oy\in\Gamma(\ox,\omega)$, then we have
\begin{eqnarray}\label{env1}
\partial^\circ v(\bar{x},\omega)\subset\partial^\circ_x u(\bar{x},\bar{y},\omega),\quad\omega\in\O.
\end{eqnarray}
\end{Theorem}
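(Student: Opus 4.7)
The proof divides naturally into the Lipschitz assertion and the gradient inclusion.

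For local Lipschitz continuity, I would fix $\omega\in\Omega$ and use Definition~\ref{viab}(i) to obtain a neighborhood $U$ of $\ox$ on which, for every $x,x'\in U$, some $y\in G(x,\omega)\cap\Gamma(x',\omega)$ is available. The Bellman equation \eqref{bell} combined with the optimality $y\in G(x,\omega)$ yields the identity $v(x,\omega)=u(x,y,\omega)+\beta\int v(y,\omega')P(d\omega'\mid\omega)$, while the feasibility $y\in\Gamma(x',\omega)$ produces the inequality $v(x',\omega)\le u(x',y,\omega)+\beta\int v(y,\omega')P(d\omega'\mid\omega)$. Subtracting cancels the stochastic integral and leaves $v(x',\omega)-v(x,\omega)\le u(x',y,\omega)-u(x,y,\omega)\le L\|x'-x\|$, where $L$ is a Lipschitz modulus of $u(\cdot,y,\omega)$ uniform over the relevant (compact) range of $y$'s, extracted from the hypothesis on $u$ and the compact-valuedness of $\Gamma$ in (H2). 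Interchanging $x$ and $x'$ and invoking lower viability in the reverse direction gives $|v(x,\omega)-v(x',\omega)|\le L\|x-x'\|$.

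For the Clarke-gradient inclusion, upper viability in Definition~\ref{viab}(ii) supplies $\oy\in\Gamma(x,\omega)$ for every $x$ near $\ox$. Substituting this fixed $\oy$ into \eqref{bell} produces the pointwise envelope $v(x,\omega)\le u(x,\oy,\omega)+W$ with the constant $W:=\beta\int v(\oy,\omega')P(d\omega'\mid\omega)$, and this envelope is tight at $x=\ox$ because $\oy\in G(\ox,\omega)$. Consequently, writing $\Psi(x):=u(x,\oy,\omega)+W$, the Lipschitz function $\Psi-v(\cdot,\omega)$ is nonnegative near $\ox$ and vanishes at $\ox$. Fermat's rule for the Clarke generalized gradient gives $0\in\partial^\circ(\Psi-v(\cdot,\omega))(\ox)$, and the regularity of $u(\cdot,\cdot,\omega)$ at $(\ox,\oy)$ forces $\Psi$ to be Clarke-regular at $\ox$ with $\partial^\circ\Psi(\ox)=\partial^\circ_xu(\ox,\oy,\omega)$. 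I would then aim to convert this into the containment $\partial^\circ v(\ox,\omega)\subset\partial^\circ_xu(\ox,\oy,\omega)$ by going through the support-function identity \eqref{supp-g}: the envelope inequality yields the directional bound $v^+(\ox,\omega;h)\le\Psi'(\ox;h)=\Psi^\circ(\ox;h)=s(h,\partial^\circ_xu(\ox,\oy,\omega))$ for all $h\in X$, after which convex separation in $X^*$ identifies $\partial^\circ v(\ox,\omega)$ as a subset of $\partial^\circ_xu(\ox,\oy,\omega)$.

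The main obstacle I anticipate is precisely the last step. Clarke's sum rule is an equality only when both summands are regular; although $\Psi$ is regular, $-v(\cdot,\omega)$ need not be, so a naive application of Fermat plus the sum rule yields merely $\partial^\circ v(\ox,\omega)\cap\partial^\circ_xu(\ox,\oy,\omega)\ne\emptyset$. In the same spirit, the elementary one-sided argument bounds only the upper Dini derivative $v^+(\ox,\omega;h)$, whereas \eqref{supp-g} requires a bound on the full Clarke derivative $v^\circ(\ox,\omega;h)$. Closing this gap amounts to establishing Clarke-regularity of the value function $v(\cdot,\omega)$ at $\ox$, a property that must be transferred from the regularity of $u$ through the envelope relation together with the Lipschitz estimate from the first part. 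I expect this regularity transfer---essentially a parametric envelope argument in the vein of Clarke's deterministic result---to be the most delicate point of the proof.
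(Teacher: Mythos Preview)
Your treatment of the Lipschitz assertion coincides with the paper's.

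For \eqref{env1} the gap you flag is real, and the paper does \emph{not} close it by transferring regularity to $v(\cdot,\omega)$. Your envelope $v(x,\omega)\le\Psi(x):=u(x,\bar y,\omega)+W$, with equality at $\bar x$, uses upper viability only to make the \emph{fixed} point $\bar y$ feasible at nearby $x$; as you note, this bounds merely the upper Dini derivative of $v$ at $\bar x$. The paper instead invokes Definition~\ref{viab}(ii) at \emph{every} base point $x$ in the neighborhood $U$: for each such $x$ it picks $y\in G(x,\omega)$, observes that $y\in\Gamma(x+\theta h,\omega)$ for small $\theta>0$, and combines the Bellman identity at $(x,y)$ with the Bellman inequality at $(x+\theta h,y)$ to obtain
\[
\frac{v(x+\theta h,\omega)-v(x,\omega)}{\theta}\le\frac{u(x+\theta h,y,\omega)-u(x,y,\omega)}{\theta}
\]
for all $x\in U$, $y\in G(x,\omega)$, and small $\theta>0$. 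Taking the $\limsup$ as $x\to\bar x$ and $\theta\downarrow 0$ now captures $v^\circ(\bar x,\omega;h)$ directly on the left; on the right, passing to the $\limsup$ with $(x,y)\to(\bar x,\bar y)$ along $\gph G(\cdot,\omega)$ and then dropping the graph restriction bounds it by $u^\circ(\bar x,\bar y,\omega;(h,0))$. Only at this final stage is regularity of $u(\cdot,\cdot,\omega)$ used, to rewrite $u^\circ(\bar x,\bar y,\omega;(h,0))=u'(\bar x,\bar y,\omega;(h,0))=u'_x(\bar x,\bar y,\omega;h)=u^\circ_x(\bar x,\bar y,\omega;h)$, after which \eqref{supp-g} and separation give \eqref{env1}. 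The idea you are missing is therefore to let the optimizer $y$ move with the base point $x$, so that the difference-quotient inequality holds uniformly in $x$ and the Clarke $\limsup$ is reached directly---no regularity of $v$ is ever established or needed.
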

\begin{proof} Fix $(\bar{x},\omega)\in X\times\Omega$ and by the local lower viability of $G$ find a neighborhood $U_{\bar{x}}$ of $\bar{x}$ such that $G(x,\omega)\cap\Gamma(x',\omega)\ne\emp$ for every $x,x'\in U_{\bar{x}}$. Picking $y\in G(x,\omega)\cap\Gamma(x',\omega)$ for arbitrary points $x,x'\in U_{\bar{x}}$ gives us
$$
v(x,\omega)=u(x,y,\omega)+\beta\int v(y,\omega')P(d\omega'\mid\omega).
$$
Since $v(x',\omega)\le u(x',y,\omega)+\beta\int v(y,\omega')P(d\omega'\mid\omega)$ by \eqref{bell} and since $u(\,\cdot,\cdot,\omega)$ is locally Lipschitzian with modulus $\ell(\omega)$, we have
$$
v(x',\omega)-v(x,\omega)\le u(x',y,\omega)-u(x,y,\omega)\le\ell(\omega)\| x'-x \|.
$$
Interchanging the role of $x$ and $x'$ above tells us that
$$
|v(x,\omega)-v(x',\omega)|\le\ell(\omega)\|x-x'\|\;\mbox{ whenever }\;x,x'\in U_{\bar{x}},
$$
and hence the value function $v(\,\cdot,\omega)$ is locally Lipschitzian for every $\omega\in\Omega$.

Next we justify the inclusion \eqref{env1} under the additional assumptions made. It follows from the local upper viability of $G$ that for every $x\in U_{\bar{x}}$ and any given direction $h\in X$ we have $G(x,\omega)\subset\Gamma(x+\theta h,\omega)$ when $\th>0$ is sufficiently small. Without loss of generality choose $y\in G(x,\omega)\subset\Gamma(x+\theta h,\omega)$ for every $\theta>0$ and thus get
$$
v(x,\omega)=u(x,y,\omega)+\beta\int v(y,\omega')P(d\omega'\mid\omega).
$$
By the principle of optimality in dynamic programming we have
$$
v(x+\theta h,\omega)\le u(x+\theta h,y,\omega)+\beta\int v(y,\omega')P(d\omega'\mid\omega),
$$
\begin{equation}\label{eq5}
\frac{v(x+\theta h,\omega)-v(x,\omega)}{\theta}\le\frac{u(x+\theta h,y,\omega)-u(x,y,\omega)}{\theta}.
\end{equation}
Passing to the limit in \eqref{eq5} gives us
\begin{eqnarray*}
\begin{array}{ll}
&\disp\limsup_\substack{(x,y)\stackrel{\mathrm{gph}\,G(\cdot,\omega)}{\longrightarrow}(\bar{x},\bar{y})}{\theta\downarrow 0}\frac{u(x+\theta h,y,\omega)-u(x,y,\omega)}{\theta}\\
&\le\disp\limsup_\substack{(x,y)\to(\bar{x},\bar{y})}{\theta\downarrow 0}\frac{u(x+\theta h,y,\omega)-u(x,y,\omega)}{\theta},
\end{array}
\end{eqnarray*}
where the notation $(x,y)\stackrel{\mathrm{gph}\,G(\cdot,\omega)}{\longrightarrow}(\bar{x},\bar{y})$ means that the limit $(x,y)\to(\bar{x},\bar{y})$ in \eqref{eq5} is taken along $(x,y)\in\mathrm{gph}\,G(\cdot,\omega)$. This shows by \eqref{dd} that
\begin{eqnarray*}
\begin{array}{ll}
v^\circ(\bar{x},\omega;h)&\le u^\circ\big(\bar{x},\bar{y},\omega;(h,0)\big)=u'\big(\bar{x},\bar{y},\omega;(h,0)\big)\\\\
&=u'_x(\bar{x},\bar{y},\omega;h)=u^\circ_x(\bar{x},\bar{y},\omega;h),
\end{array}
\end{eqnarray*}
where $u'(\bar{x},\bar{y},\omega;(h,0))$ (resp.\ $u^\circ(\bar{x},\bar{y},\omega;(h,0))$) denotes the (resp.\ generalized) directional derivative of $u(\cdot,\cdot,\omega)$ at $(\bar{x},\bar{y})$ in the direction $(h,0)\in X\times X$, and where $u'_x(\bar{x},\bar{y},\omega;h)$ (resp.\ $u^\circ_x(\bar{x},\bar{y},\omega;h)$) denotes the partial (resp.\ generalized) directional derivative of $u(\cdot,\bar{y},\omega)$ at $\bar{x}$ in the direction $h\in X$. The obtained inequality is equivalent to
$$
s\big(h,\partial^\circ v(\bar{x},\omega)\big)\le s\big(h,\partial^\circ_x u(\bar{x},\bar{y},\omega)\big)\;\mbox{ for every }\;h\in X
$$
in terms of the support function \eqref{supp}. Employing finally the separation theorem, we arrive at \eqref{env1} and complete the proof.
\end{proof}

As a simple consequence of Theorem~\ref{env}, we get the following result of the strict differentiability of the value function $v(\cdot,\omega)$. It is a significant improvement upon the known results in this direction with applications to optimal economic growth models (see, e.g., \cite{acr09,am96,bs79,blv96,ki93,slp89}), since we manage to remove the convexity and the submodularity assumptions and mitigate the interior condition in the Banach space setting.

\begin{Corollary}[\bf strict differentiability of the value function in stochastic DP]\label{dif-val} Assume in the setting of Theorem~{\rm\ref{env}} that $G$ is locally upper viable around $\ox$ and that $u(\cdot,\bar{y},\omega)$ is strictly differentiable at $\bar{x}\in X$ with $(\bar{x},\bar{y})\in\mathrm{gph}\,G(\cdot,\omega)$. Then the value function $v(\cdot,\omega)$ is strictly differentiable at $\bar{x}$ and its strict derivative at $\ox$ is calculated by
$$
\nabla v(\bar{x},\omega)=\nabla_xu(\bar{x},\bar{y},\omega),\quad\omega\in\O.
$$
\end{Corollary}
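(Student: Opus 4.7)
The plan is to combine the generalized gradient inclusion from Theorem~\ref{env} with the classical singleton characterization of strict differentiability for locally Lipschitzian functions. Specifically, strict differentiability of $u(\cdot,\oy,\omega)$ at $\ox$ is exactly the extra ingredient needed to collapse the upper estimate from Theorem~\ref{env} to a single vector.

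First I would observe that, as recalled in the paragraph preceding this corollary, strict differentiability of $u(\cdot,\oy,\omega)$ at $\ox$ implies both that this partial function is locally Lipschitzian around $\ox$ and regular at $\ox$, and that its partial generalized gradient reduces to a singleton, namely $\partial^\circ_x u(\ox,\oy,\omega)=\{\nabla_x u(\ox,\oy,\omega)\}$. The hypotheses of Theorem~\ref{env} are therefore in force (with the regularity needed in its second part supplied by strict differentiability), so its first conclusion gives that $v(\cdot,\omega)$ is locally Lipschitzian around $\ox$, and its second conclusion gives the inclusion
\[
\partial^\circ v(\ox,\omega)\subset\partial^\circ_x u(\ox,\oy,\omega)=\{\nabla_x u(\ox,\oy,\omega)\}.
\]

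Next I would use the fact noted after \eqref{gg} that the generalized gradient of a locally Lipschitzian function is always nonempty in an arbitrary Banach space, which forces equality $\partial^\circ v(\ox,\omega)=\{\nabla_x u(\ox,\oy,\omega)\}$. The remaining step is to promote this singleton conclusion to strict Fr\'echet differentiability of $v(\cdot,\omega)$ at $\ox$. By the support-function identity \eqref{supp-g}, singleton-valuedness yields
\[
v^\circ(\ox,\omega;h)=\langle\nabla_x u(\ox,\oy,\omega),h\rangle\quad\text{and}\quad v^\circ(\ox,\omega;-h)=-\langle\nabla_x u(\ox,\oy,\omega),h\rangle
\]
for every $h\in X$. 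Unpacking the definition \eqref{dd}, these two equalities say that both the $\limsup$ and the $\liminf$ of the difference quotient $[v(x+\theta h,\omega)-v(x,\omega)]/\theta$ as $x\to\ox$ and $\theta\downarrow 0$ coincide with the same linear expression, so the limit exists and is linear in $h$. Combined with local Lipschitz continuity of $v(\cdot,\omega)$, this yields strict Fr\'echet differentiability with the claimed derivative.

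The only delicate point is the last one: it is the classical converse of Proposition~2.2.4 in Clarke \cite{cl83}, asserting that a locally Lipschitzian function on a Banach space whose generalized gradient is a singleton at a point is strictly differentiable there. I would either cite it directly or give the short argument sketched above. A secondary subtlety, if one wants to be scrupulous about the hypotheses, is whether the joint regularity of $u(\cdot,\cdot,\omega)$ used in Theorem~\ref{env} is implied by strict differentiability of $u(\cdot,\oy,\omega)$ alone; if not, one can re-run the chain of inequalities in the proof of Theorem~\ref{env} starting from \eqref{eq5}, since in the limit only the partial directional behavior of $u$ in $x$ at $(\ox,\oy)$ is actually invoked, and strict differentiability of $u(\cdot,\oy,\omega)$ at $\ox$ controls this term directly.
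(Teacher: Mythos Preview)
Your main argument is correct and follows the paper's approach exactly: apply Theorem~\ref{env} to obtain $\partial^\circ v(\ox,\omega)\subset\partial^\circ_x u(\ox,\oy,\omega)=\{\nabla_x u(\ox,\oy,\omega)\}$, use nonemptiness to upgrade the inclusion to equality, and then invoke the converse of \cite[Proposition~2.2.4]{cl83} (a locally Lipschitz function with singleton generalized gradient is strictly differentiable there). The paper's one-line proof relies on precisely these facts.

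You go further than the paper by flagging the mismatch between the \emph{joint} regularity of $u(\cdot,\cdot,\omega)$ at $(\ox,\oy)$ that the second part of Theorem~\ref{env} requires and the \emph{partial} strict differentiability of $u(\cdot,\oy,\omega)$ assumed in the corollary; the paper's terse proof elides this. Your proposed workaround, however, does not fully close the gap: on the right of \eqref{eq5} the point $y\in G(x,\omega)$ varies with $x$, so the relevant limsup is the joint quantity $u^\circ(\ox,\oy,\omega;(h,0))$ rather than the partial $u^\circ_x(\ox,\oy,\omega;h)$, and strict differentiability of $u(\cdot,\oy,\omega)$ at $\ox$ with $\oy$ held fixed does not by itself control the former. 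This is a lacuna the paper shares; both proofs become airtight under the slightly stronger hypothesis that $u(\cdot,\cdot,\omega)$ is strictly differentiable (or at least regular) at $(\ox,\oy)$.
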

\begin{proof} It immediately follows from the facts \cite{cl83} that any function strictly differentiable at a given point is regular at this point and its generalized gradient reduces to the strict derivative therein.
\end{proof}

The next theorem provides necessary optimality conditions in the stochastic DP problem \eqref{P} formulated in the form of the {\em stochastic Euler inclusion} involving the $w^*$-integral from Definition~\ref{gel-convex} and the construction of the {\em generalized normal cone} \cite{cl83} to a subset $C$ of a Banach space defined by
\begin{eqnarray}\label{cnc}
N^\circ(\oz;C):=\overline{\disp\bigcup_{\al>0}\al\,\partial^\circ{\rm dist}(\oz;C)}^{w^*}\;\mbox{ at }\;\oz\in C
\end{eqnarray}
via the generalized gradient of the Lipschitz continuous {\em distance function} to a set given by ${\rm dist}(z;C):=\inf\{\|c-z\|\;c\in C\}$. The set $C$ is {\em regular} at $\oz\in C$ if the associated distance function is regular at this point.

Note also that the theorem below imposes the integrability of the Lipschitzian modulus of $u(\cdot,\cdot,\omega)$ from Theorem~\ref{env} with respect to the measure $P(\cdot\mid\omega)$, which precisely means that $\int\ell(\omega')P(d\omega'\mid\omega)<\infty$ for such a modulus $\ell(\omega)$ whenever $\omega\in\Omega$, $y\in G(x,\omega)$, and $x$ near $\ox$.

\begin{Theorem}[\bf stochastic Euler inclusion in Banach spaces]\label{cor2}Let $X$ be a separable Banach space. In addition to the assumptions of Theorem~{\rm\ref{env}}, suppose that the Lipschitz modulus $\ell(\omega)$ of $u(\cdot,\cdot,\omega)$ is integrable with respect to a complete probability measure $P(\cdot\mid\omega)$, that $u(\cdot,\cdot,\omega)$ is regular at $(\ox,\oy)$, and that $G$ is locally upper viable around $\ox$. Then for every policy function $g:X\times\Omega\to X$ we have the following stochastic Euler inclusion:
\begin{eqnarray}\label{sei}
\begin{array}{ll}
0\in\partial^\circ_y u(\bar{x},\bar{y},\omega)&+\beta\,w^*\mbox{-}\disp\int\partial^\circ_x u\big(\bar{y},g(\bar{y},\omega'),\omega'\big)P(d\omega'\mid\omega)\\
&+N^\circ\big(\bar{y};\Gamma(\bar{x},\omega)\big),\quad\omega\in\O.
\end{array}
\end{eqnarray}
\end{Theorem}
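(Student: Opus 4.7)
The starting point is that, by definition \eqref{policy} of the policy multifunction, any $\bar{y}\in G(\bar{x},\omega)\cap\Gamma(\bar{x},\omega)$ (and in particular $\bar{y}=g(\bar{x},\omega)$ for the given policy function $g$) minimizes
$$F_\omega(y):=u(\bar{x},y,\omega)+\beta J_\omega(y),\quad J_\omega(y):=\int v(y,\omega')P(d\omega'\mid\omega),$$
over the constraint set $\Gamma(\bar{x},\omega)$. The plan is to apply Clarke's first-order necessary condition for constrained Lipschitzian minimization, split the generalized gradient of $F_\omega$ via a sum rule, use Theorem~\ref{cla} to estimate $\partial^\circ J_\omega(\bar{y})$, and finally invoke Theorem~\ref{env} pointwise inside the resulting $w^*$-integral.

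First I would verify that $J_\omega$ is locally Lipschitzian around $\bar{y}$. Theorem~\ref{env} provides that $v(\cdot,\omega')$ is locally Lipschitzian with the same modulus $\ell(\omega')$ as $u(\cdot,\cdot,\omega')$, and the standing hypothesis $\int\ell(\omega')P(d\omega'\mid\omega)<\infty$ supplies a uniformly integrable Lipschitz modulus on a neighborhood of $\bar{y}$. The measurability of $v(y,\cdot)$ follows from $v\in\mathcal{C}_b(X\times\Omega)$ in Theorem~\ref{bell-eq}. With these ingredients in place, Theorem~\ref{cla} applies to $J_\omega$ and yields
$$\partial^\circ J_\omega(\bar{y})\subset w^*\mbox{-}\int\partial^\circ v(\bar{y},\omega')\,P(d\omega'\mid\omega).$$
The constrained optimality of $\bar{y}$ together with the Clarke calculus (the sum rule holds as inclusion on a Banach space, with equality under regularity of the summands) then gives
$$0\in\partial^\circ_yu(\bar{x},\bar{y},\omega)+\beta\,\partial^\circ J_\omega(\bar{y})+N^\circ\bigl(\bar{y};\Gamma(\bar{x},\omega)\bigr),$$
and substituting the preceding $w^*$-integral bound for $\partial^\circ J_\omega(\bar{y})$ already puts the conclusion within one step.

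The last step is to replace $\partial^\circ v(\bar{y},\omega')$ inside the $w^*$-integral by $\partial^\circ_x u(\bar{y},g(\bar{y},\omega'),\omega')$. Since $g(\bar{y},\omega')\in G(\bar{y},\omega')$ and the hypotheses of Theorem~\ref{env} are in force at $(\bar{y},g(\bar{y},\omega'))$ for each $\omega'$, that theorem gives the pointwise inclusion
$$\partial^\circ v(\bar{y},\omega')\subset\partial^\circ_xu\bigl(\bar{y},g(\bar{y},\omega'),\omega'\bigr).$$
Monotonicity of the $w^*$-integral follows immediately from Definition~\ref{gel-convex} (the support function of a subset is dominated by that of a superset, and support functions characterize $w^*$-integrals of $w^*$-compact convex-valued mappings), so
$$w^*\mbox{-}\int\partial^\circ v(\bar{y},\omega')\,P(d\omega'\mid\omega)\subset w^*\mbox{-}\int\partial^\circ_xu\bigl(\bar{y},g(\bar{y},\omega'),\omega'\bigr)\,P(d\omega'\mid\omega),$$
which combined with the optimality condition yields \eqref{sei}.

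The principal obstacle I anticipate is the measurability bookkeeping needed to legitimately form the $w^*$-integral of the composed multifunction $\omega'\mapsto\partial^\circ_x u(\bar{y},g(\bar{y},\omega'),\omega')$: Theorem~\ref{cla} is applied to the integrand $v$, whose partial generalized gradient $\partial^\circ v(\bar{y},\cdot)$ must be $w^*$-scalarly measurable, and one then wants the pointwise inclusion above to transfer into a $w^*$-integral inclusion, which requires the dominating multifunction to itself be $w^*$-scalarly measurable and $w^*$-integrable. This forces one to invoke Proposition~\ref{gg-mes} (or the separability of $X$) for $v$, and to combine joint measurability of $u$ with measurability of the selector $g(\bar{y},\cdot)$ to obtain scalar measurability of $\partial^\circ_xu(\bar{y},g(\bar{y},\cdot),\cdot)$. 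A secondary but essentially routine obstacle is justifying the Clarke sum rule and the Lagrangian-style optimality condition $0\in\partial^\circ F_\omega(\bar{y})+N^\circ(\bar{y};\Gamma(\bar{x},\omega))$ in the general Banach space setting, both of which are available from \cite{cl83} provided the ambient local Lipschitzness has been established.
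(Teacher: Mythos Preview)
Your proposal is correct and follows essentially the same route as the paper's proof: apply Clarke's necessary condition for constrained minimization at $\bar{y}$, use the sum rule to separate $\partial^\circ_y u$ from $\beta\,\partial^\circ J_\omega$, invoke Theorem~\ref{cla} to bound $\partial^\circ J_\omega(\bar{y})$ by the $w^*$-integral of $\partial^\circ v(\bar{y},\cdot)$, and then use Theorem~\ref{env} pointwise together with monotonicity of the $w^*$-integral to pass to $\partial^\circ_x u(\bar{y},g(\bar{y},\omega'),\omega')$. Your additional remarks on the measurability bookkeeping and on justifying the $w^*$-integral monotonicity via support functions are sound and in fact make explicit points the paper leaves implicit.
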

\begin{proof} Since $\oy$ is a local optimal solution to the constrained minimization problem in \eqref{bell}, we have from \cite[Corollary to Proposition~2.4.3]{cl83} that
$$
0\in\partial^\circ_y\left(u(\cdot,\cdot,\omega)+\disp\beta\int v(\cdot,\omega')P(d\omega'\mid\omega)\right)(\ox,\oy)+N^\circ\big(\bar{y};\Gamma(\bar{x},\omega)\big),
$$
which implies by the calculus rules from \cite[Proposition~2.3.1 and Proposition~2.3.3]{cl83} the validity of the inclusion
\begin{eqnarray*}
&\partial^\circ_y\left(u(\cdot,\cdot,\omega)+\beta\disp\int v(\cdot,\omega')P(d\omega'\mid\omega)\right)(\ox,\oy)\\
&\subset\partial^\circ_y u(\bar{x},\bar{y},\omega)+\disp\beta\,\partial^\circ\Big(\int v(\cdot,\omega')P(d\omega'\mid\omega)\Big)(\oy).
\end{eqnarray*}
On the other hand, the generalized Leibniz rule from Theorem~\ref{cla} yields
$$
\partial^\circ\Big(\int v(\cdot,\omega')P(d\omega'\mid\omega\Big)(\oy)\subset\disp w^*\mbox{-}\int\partial^\circ v(\bar{y},\omega')P(d\omega'\mid\omega).
$$
Taking now any policy function $g$ and using Theorem~\ref{env} tell us that
\begin{equation}\label{eq3}
\partial^\circ v(\bar{y},\omega')\subset\partial^\circ_x u\big(\bar{y},g(\bar{y},\omega'),\omega'\big)\;\mbox{ for every }\;\omega'\in\Omega.
\end{equation}
Integrating this inclusion with respect to $P(\cdot\mid\omega)$ and combining it with the inclusions above, we arrive at the stochastic Euler inclusion \eqref{sei}.
\end{proof}

We now proceed with deriving necessary optimality conditions for the stochastic DP problem \eqref{P} by using the limiting subdifferential constructions in Asplund spaces. In this way we obtain a more refined inclusion for optimal solutions in comparison with the stochastic Euler one \eqref{sei} under somewhat different assumptions. We need the following additional definitions while referring the reader to \cite{mo06a} for more details and discussions.

Given $\omega\in\Omega$, we say that $\Gamma(\cdot,\omega):X\rightrightarrows X$ is \textit{Lipschitz-\hspace{0pt}like} around $(\bar{x},\bar{y})\in X\times X$ (or has Aubin's pseudo-Lipschitz property) if there are neighborhoods $U$ of $\bar{x}$ and $V$ of $\bar{y}$ as well as modulus $\ell(\omega)\ge 0$ such that
$$
\Gamma(x,\omega)\cap V\subset\Gamma(x',\omega)+\ell(\omega)\|x-x'\|B_X\;\mbox{ for every }\;x,x'\in U.
$$
A policy multifunction $G(\cdot,\omega):X\rightrightarrows X$ is \textit{inner semicontinuous} at $(\bar{x},\bar{y})\in \mathrm{gph}\,G(\cdot,\omega)$ if for every sequence $x_k\to\bar{x}$ there is a sequence of $y_k\in G(x_k,\omega)$ that contains a subsequence converging to $\bar{y}$, where the convergence is understood in the norm topology of $X$.

Similarly to but a bit differently from \eqref{cnc}, define the {\em basic/limiting normal cone} \cite{mo06a} to a subset $C$ of a Banach space $Z$ by
\begin{eqnarray}\label{lnc}
N(\oz;C):=\disp\bigcup_{\al>0}\al\,\partial{\rm dist}(\oz;C)\;\mbox{ at }\;\oz\in C
\end{eqnarray}
via the limiting subdifferential \eqref{bs} of the distance function. We have $N^\circ(\oz;C)=\overline{{\rm co}}^{w^*}N(\oz;C)$
provided that the space $Z$ is Asplund and that $C$ is locally closed around $\oz$; see \cite[Theorem~3.57]{mo06a}.\vspace*{0.03in}

Our first result in this direction provides a Gelfand integral condition for the limiting subdifferential of the random value function $v(\cdot,\omega)$ on the Asplund action space $X$ that generally holds without imposing either the regularity assumption on the cost function $u(\cdot,\cdot,\omega)$ or the local upper viability assumption on the policy multifunction $G$.

\begin{Theorem}[\bf Gelfand integral relations for the limiting subdifferential of the random value function in stochastic DP]\label{mor2}
Let $X$ be a separable Asplund space, and let $u(\cdot,\cdot,\omega)$ be locally Lipschitzian around $(\bar{x},\bar{y})\in\mathrm{gph}\,G(\cdot,\omega)$ for every $\omega\in\Omega$ so that its Lipschitz modulus $\ell(\omega)$ is integrable with respect to a complete and saturated probability measure $P(\cdot\mid\omega)$. Assume also that the constraint mapping $\Gamma(\cdot,\omega)$ is of closed graph and Lipschitz-\hspace{0pt}like around $(\bar{x},\bar{y})$ and that the policy multifunction $G(\cdot,\omega)$ is locally lower viable around $\ox$ and inner semicontinuous at $(\bar{x},\bar{y})$. Then there exists $x^*\in X^*$ such that we have the inclusion
\begin{eqnarray}\label{mor2-v}
\begin{array}{ll}
(x^*,0)\in\partial u(\bar{x},\bar{y},\omega)&+\left(\{0\}\times\beta\disp\int\partial v(\bar{y},\omega')P(d\omega'\mid\omega)\right)\\
&+N\big((\bar{x},\bar{y});\mathrm{gph}\,\Gamma(\cdot,\omega)\big),\quad\omega\in\O.
\end{array}
\end{eqnarray}
If furthermore the function $u(\cdot,\cdot,\omega)$ is regular at $(\ox,\oy)$ and the graph of the mapping $\Gamma_\omega(x):=\Gamma(x,\omega)$, $x\in X$, is also regular at this point for every $\omega\in\O$, then \eqref{mor2-v} is equivalent to the inclusions
\begin{eqnarray}\label{refined}
\left\{\begin{array}{ll}
0\in\partial_y u(\bar{x},\bar{y},\omega)+\beta\,\disp\int\partial v(\bar{y},\omega')P(d\omega'\mid\omega)+N\big(\bar{y};\Gamma(\bar{x},\omega)\big),\\
x^*\in\partial_x u(\ox,\oy,\omega)+N\big(\bar{x};\Gamma_\omega^{-1}(\bar{y})\big),\quad\omega\in\O.
\end{array}\right.
\end{eqnarray}
\end{Theorem}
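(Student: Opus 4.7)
The plan is to interpret the Bellman equation \eqref{bell} as presenting $v(\cdot,\omega)$ as the marginal function of the parametric problem $\inf\{\phi_\omega(x,y)\mid y\in\Gamma(x,\omega)\}$, where $\phi_\omega(x,y):=u(x,y,\omega)+\beta\int v(y,\omega')P(d\omega'\mid\omega)$ and the argmin multifunction is $G(\cdot,\omega)$. Since $v(\cdot,\omega)$ is locally Lipschitzian around $\ox$ by Theorem~\ref{env} and $X$ is Asplund, the basic subdifferential $\partial v(\ox,\omega)$ is nonempty; I would pick any $x^*\in\partial v(\ox,\omega)$ and aim to decompose it as in \eqref{mor2-v}.

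First I would invoke the Mordukhovich marginal function rule for the limiting subdifferential in Asplund spaces (see \cite{mo06a}): under the inner semicontinuity of $G(\cdot,\omega)$ at $(\ox,\oy)$ and the Lipschitz-like property of $\Gamma(\cdot,\omega)$ around $(\ox,\oy)$, every $x^*\in\partial v(\ox,\omega)$ satisfies $(x^*,0)\in\partial\phi_\omega(\ox,\oy)+N((\ox,\oy);\gph\Gamma(\cdot,\omega))$. Next, because $u(\cdot,\cdot,\omega)$ is locally Lipschitzian and the second term of $\phi_\omega$ depends only on $y$, the Asplund-space sum rule for Lipschitz functions yields $\partial\phi_\omega(\ox,\oy)\subset\partial u(\ox,\oy,\omega)+(\{0\}\times\beta\,\partial\psi_\omega(\oy))$, where $\psi_\omega(y):=\int v(y,\omega')P(d\omega'\mid\omega)$. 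Finally, I would apply Corollary~\ref{cor1} to $\psi_\omega$: by Theorem~\ref{env} the function $v(\cdot,\omega')$ inherits the Lipschitz modulus $\ell(\omega')$ of $u(\cdot,\cdot,\omega')$, which is $P(\cdot\mid\omega)$-integrable; $P(\cdot\mid\omega)$ is a complete saturated probability measure; and Proposition~\ref{gg-mes} (applicable since $X$ is separable) supplies the $w^*$-scalar measurability of $\partial v(\oy,\cdot)$. Corollary~\ref{cor1} then gives $\partial\psi_\omega(\oy)\subset\int\partial v(\oy,\omega')P(d\omega'\mid\omega)$. Chaining the three inclusions produces \eqref{mor2-v}.

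For the equivalence of \eqref{mor2-v} with \eqref{refined} under the additional regularity of $u(\cdot,\cdot,\omega)$ and of $\gph\Gamma_\omega$ at $(\ox,\oy)$, I would exploit that regularity of $u$ turns the sum inclusion into the product $\partial u(\ox,\oy,\omega)=\partial_x u(\ox,\oy,\omega)\times\partial_y u(\ox,\oy,\omega)$, while regularity of $\gph\Gamma_\omega$ collapses its limiting normal cone to the Fr\'echet one. The Fr\'echet normal cone then projects cleanly, so every $(p,q)\in N((\ox,\oy);\gph\Gamma_\omega)$ admits $p\in N(\ox;\Gamma_\omega^{-1}(\oy))$ and $q\in N(\oy;\Gamma(\ox,\omega))$. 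Reading \eqref{mor2-v} componentwise then delivers the two lines of \eqref{refined}: the $y$-coordinate gives the first-order condition in $\oy$, and the $x$-coordinate gives the relation involving $\Gamma_\omega^{-1}(\oy)$.

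The hard part will be the combined application of the marginal function rule and the sum rule: the marginal function rule requires not only the inner semicontinuity of $G(\cdot,\omega)$ but also an appropriate qualification condition, which is provided here by the Lipschitz-like property of $\Gamma(\cdot,\omega)$ together with the local Lipschitz continuity of $\phi_\omega$ — the latter propagating from $u$ to the integral term via Theorem~\ref{env} and the integrability of $\ell(\omega')$. A secondary delicate point is the normal-cone decomposition in the final paragraph: while the product splitting of $\partial u$ under regularity is standard, the clean separation of $N((\ox,\oy);\gph\Gamma_\omega)$ into the one-sided normal cones to $\Gamma_\omega^{-1}(\oy)$ and $\Gamma(\ox,\omega)$ genuinely uses that graph regularity survives projection onto each factor, a step that would fail in the absence of the regularity hypothesis on $\gph\Gamma_\omega$.
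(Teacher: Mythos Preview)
Your proposal is correct and follows essentially the same route as the paper: marginal-function representation of $v(\cdot,\omega)$ via the Bellman equation, the Mordukhovich--Nam--Yen subdifferential rule for marginal functions (the paper cites \cite[Corollary~3]{mny09}), the Asplund sum rule \cite[Theorem~2.33]{mo06} to split off the integral term, Corollary~\ref{cor1} for the Leibniz step, and then the product decompositions under regularity via \cite[Corollary~3.44 and the proof of Corollary~3.17]{mo06a}. Your additional remarks on why Corollary~\ref{cor1} applies (integrable Lipschitz modulus inherited from Theorem~\ref{env}, $w^*$-scalar measurability via Proposition~\ref{gg-mes}) and on the role of the Lipschitz-like property as the qualification condition are accurate and slightly more explicit than the paper's own presentation.
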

\begin{proof} It is shown in Theorem~\ref{env} that the value function $v(\cdot,\omega)$ and the objective function $u(\cdot,\cdot,\omega)+\beta\int v(\cdot,\omega')P(d\omega'\mid\omega)$ in problem \eqref{P} are locally Lipschitzian around $\ox$ under the general assumptions of the theorem. Thus all the assumptions of \cite[Corollary~3]{mny09} obtained for the parametric optimization framework \eqref{mf} are satisfied in our setting \eqref{bell}, which allows us to find a dual element $x^*\in X^*$ such that
$$
(x^*,0)\in\partial\left(u\big(\cdot,\cdot,\omega\big)+\disp\beta\int v(\cdot,\omega')P(d\omega'\mid\omega)\right)(\ox,\oy)+N\big((\bar{x},\bar{y});\mathrm{gph}\,\Gamma(\cdot,\omega)\big).
$$
It follows from the subdifferential sum rule of \cite[Theorem~2.33]{mo06} that
\begin{eqnarray*}
&\partial\left(u(\cdot,\cdot,\omega)+\disp\beta\int v(\cdot,\omega')P(d\omega'\mid\omega)\right)(\ox,\oy)\\
&\subset\partial u(\bar{x},\bar{y},\omega)+\left[\{0\}\times\disp\beta\,\partial\Big(\int v(\cdot,\omega')P(d\omega'\mid\omega\Big)(\oy)\right].
\end{eqnarray*}
Employing further the Leibniz rule obtained in Corollary~\ref{cor1} yields
$$
\partial\Big(\int v(\cdot,\omega')P(d\omega'\mid\omega)\Big)(\oy)\subset\int\partial v(\bar{y},\omega')P(d\omega'\mid\omega)
$$
and thus verifies \eqref{mor2-v} under the general assumptions made.

Invoking now the additional regularity assumptions gives us
$$
\partial u(\bar{x},\bar{y},\omega)=\partial_x u(\bar{x},\bar{y},\omega)\times\partial_y u(\bar{x},\bar{y},\omega)
$$
by \cite[Corollary~3.44]{mo06a} and its corresponding counterpart for the limiting normal cone \eqref{lnc} that follows from the proof of \cite[Corollary~3.17]{mo06a}. Hence we arrive at \eqref{refined} and thus complete the proof of the theorem.
\end{proof}

Now we can derive from Theorem~\ref{mor2} and Theorem~\ref{env} the following necessary optimality conditions for problem \eqref{P} entirely via its initial data, where the {\em refined} stochastic Euler inclusion significantly improves the one from Theorem~\ref{cor2} under somewhat different assumptions.

\begin{Corollary}[\bf enhanced stochastic Euler inclusion in Asplund spaces]\label{enhance} In addition to the general assumptions of Theorem~{\rm\ref{mor2}} we suppose that the cost function $u(\cdot,\cdot,\omega)$ is regular at $(\bar{x},\bar{y})\in\mathrm{gph}\,G(\cdot,\omega)$ and that the policy multifunction $G$ is locally upper viable around $\ox$. Then there exists $x^*\in X^*$ such that for every policy function $g:X\times\Omega\to X$ the following enhanced stochastic Euler inclusion is satisfied:
\begin{eqnarray}\label{euler1}
\begin{array}{ll}
(x^*,0)\in\partial u\big(\bar{x},\bar{y},\omega\big)&+\left(0,\beta\disp\int\partial_x u\big(\bar{y},g(\bar{y},\omega'),\omega'\big)P(d\omega'\mid\omega)\right)\\
&+N\big((\bar{x},\bar{y});\mathrm{gph}\,\Gamma(\cdot,\omega)\big),\quad\omega\in\O.
\end{array}
\end{eqnarray}
If furthermore the set $\gph\Gamma_\omega$ is regular at $(\ox,\oy)$ for every  $\omega\in\O$, then \eqref{euler1} is equivalent to the two inclusions:
\begin{eqnarray*}
\left\{\begin{array}{ll}
0\in\partial_y u(\bar{x},\bar{y},\omega)+\beta\,\disp\int\partial_x u\big(\bar{y},g(\bar{y},\omega'),\omega'\big) P(d\omega'\mid\omega)+N\big(\bar{y};\Gamma(\bar{x},\omega)\big),\\
x^*\in\partial_x u(\ox,\oy,\omega)+N\big(\bar{x};\Gamma^{-1}_\omega(\bar{y})\big),\quad\omega\in\O.
\end{array}\right.
\end{eqnarray*}
\end{Corollary}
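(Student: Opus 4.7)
The plan is to bootstrap Theorem~\ref{mor2} via the envelope-type bound of Theorem~\ref{env} and then split using the regularity hypotheses. All hypotheses of Theorem~\ref{mor2} are present in the corollary (local Lipschitzness of $u(\cdot,\cdot,\omega)$ at $(\ox,\oy)$, the closed-graph and Lipschitz-like property of $\Gamma(\cdot,\omega)$, and lower viability plus inner semicontinuity of $G(\cdot,\omega)$ are inherited from Theorem~\ref{mor2}; the remaining regularity and upper viability are the extra assumptions of the corollary). Therefore Theorem~\ref{mor2} supplies $x^\ast\in X^\ast$ with
\begin{eqnarray*}
(x^\ast,0)\in\partial u(\ox,\oy,\omega)+\Big(\{0\}\times\beta\int\partial v(\oy,\omega')P(d\omega'\mid\omega)\Big)+N\big((\ox,\oy);\gph\Gamma(\cdot,\omega)\big).
\end{eqnarray*}

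To convert the integrand $\partial v(\oy,\omega')$ into a datum expressed entirely in terms of $u$, I would apply Theorem~\ref{env} at the point $(\oy,g(\oy,\omega'))\in\gph G(\cdot,\omega')$, whose hypotheses (local Lipschitzness of $u(\cdot,\cdot,\omega')$ and local upper/lower viability of $G$) are assumed. This gives $\partial^\circ v(\oy,\omega')\subset\partial^\circ_x u(\oy,g(\oy,\omega'),\omega')$. The Clarke regularity of $u(\cdot,\cdot,\omega')$ at $(\oy,g(\oy,\omega'))$ entails partial regularity of $u(\cdot,g(\oy,\omega'),\omega')$ at $\oy$, so that on the Asplund space $X$ one has $\partial^\circ_x u(\oy,g(\oy,\omega'),\omega')=\partial_x u(\oy,g(\oy,\omega'),\omega')$ via \eqref{mor1} and the $\mathit{w}^\ast$-closedness/convexity of a regular limiting subdifferential. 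Combining with $\partial v\subset\partial^\circ v$ yields the pointwise inclusion
\begin{eqnarray*}
\partial v(\oy,\omega')\subset\partial_x u\big(\oy,g(\oy,\omega'),\omega'\big)\quad\text{for every }\omega'\in\Omega.
\end{eqnarray*}
Integrating this selector-by-selector with respect to $P(\cdot\mid\omega)$ (in the Gelfand sense, which is well defined under the integrability of the Lipschitz modulus $\ell(\omega')$ controlling the right-hand side) and substituting into the displayed inclusion from Theorem~\ref{mor2} delivers the enhanced stochastic Euler inclusion \eqref{euler1}.

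The equivalent split form under the additional regularity of $\gph\Gamma_\omega$ at $(\ox,\oy)$ is obtained exactly as in the derivation of \eqref{refined} in Theorem~\ref{mor2}: the Clarke regularity of $u(\cdot,\cdot,\omega)$ at $(\ox,\oy)$ gives the product formula $\partial u(\ox,\oy,\omega)=\partial_x u(\ox,\oy,\omega)\times\partial_y u(\ox,\oy,\omega)$ via \cite[Corollary~3.44]{mo06a}, and the regularity of $\gph\Gamma_\omega$ together with the standard identification $\gph\Gamma_\omega^{-1}=\{(y,x):(x,y)\in\gph\Gamma_\omega\}$ produces the analogous product decomposition of $N((\ox,\oy);\gph\Gamma_\omega)$ into $N(\ox;\Gamma_\omega^{-1}(\oy))\times N(\oy;\Gamma(\ox,\omega))$ (the argument from the proof of \cite[Corollary~3.17]{mo06a} invoked in Theorem~\ref{mor2}). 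Projecting \eqref{euler1} onto the two factors gives the two inclusions stated.

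The main obstacle I foresee is not the algebra of normal cones but the bridge step: ensuring that the inclusion $\partial v(\oy,\omega')\subset\partial_x u(\oy,g(\oy,\omega'),\omega')$ can be integrated against $P(\cdot\mid\omega)$. One has to check that the right-hand side is a $\mathit{w}^\ast$-scalarly measurable, integrably bounded, Gelfand integrable multifunction of $\omega'$ (so that the Gelfand integral on the right is meaningful and the monotonicity of the integral with respect to set inclusion of $\mathcal{S}^1_\Gamma$ applies), and that the chosen measurable policy selector $g(\oy,\cdot)$ preserves these properties. The integrable Lipschitz modulus of $u(\cdot,\cdot,\omega')$ with respect to $P(\cdot\mid\omega)$, the measurability of $g(\oy,\cdot)$, and the saturation hypothesis inherited from Theorem~\ref{mor2} together with Corollary~\ref{cor1} are precisely what is needed to close this gap.
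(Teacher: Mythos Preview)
Your proposal is correct and follows essentially the same route as the paper: start from the conclusion \eqref{mor2-v} of Theorem~\ref{mor2}, then replace $\int\partial v(\oy,\omega')P(d\omega'\mid\omega)$ by $\int\partial_x u(\oy,g(\oy,\omega'),\omega')P(d\omega'\mid\omega)$ using the envelope inclusion \eqref{eq3} from Theorem~\ref{env}, and finally split via the regularity hypotheses exactly as in the derivation of \eqref{refined}. The only cosmetic difference is that the paper organizes the bridge step as the chain $\int\partial v=\int\partial^\circ v\subset\int\partial^\circ_x u=\int\partial_x u$, invoking the integral equality of Corollary~\ref{cor1} (saturation) twice, whereas you first combine pointwise $\partial v\subset\partial^\circ v\subset\partial^\circ_x u=\partial_x u$ (the last equality via regularity) and then integrate; the content is the same, and your final paragraph correctly identifies that the measurability/integrability checks are handled by the saturation framework inherited from Theorem~\ref{mor2} and Corollary~\ref{cor1}.
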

\begin{proof}
Integrating inclusion \eqref{eq3} with respect to the saturated probability measure $P(\cdot\mid\omega)$ and then employing Corollary~\ref{cor1} yield the relationships
\begin{eqnarray*}
\begin{array}{ll}
&\disp\int\partial v(\bar{y},\omega')P(d\omega'\mid\omega)=\int\partial^\circ v(\bar{y},\omega')P(d\omega'\mid\omega)\\
&\subset\disp\int\partial^\circ_x u\big(\bar{y},g(\bar{y},\omega'),\omega'\big)P(d\omega'\mid\omega)=\disp\int\partial_x u\big(\bar{y},g(\bar{y},\omega'),\omega'\big)P(d\omega'\mid\omega).
\end{array}
\end{eqnarray*}
Substituting the obtained inclusion into \eqref{mor2-v} gives us \eqref{euler1}. The equivalence of the latter to the last relationships in the corollary under the additional regularity assumption follows from the proof of Theorem~\ref{mor2}.
\end{proof}

Similarly to Remark~\ref{sep-red}, we conjecture that developing the method of separable reduction would allow us to {\em overcome the separability} assumption on the action space $X$ in Theorem~\ref{mor2} and its consequences involving the limiting subdifferential constructions in Asplund spaces.\vspace*{0.03in}

The results obtained above in this section address the case of general random constraint mappings $\Gamma:X\times\Omega\rightrightarrows X$. In the vast majority of applications the mapping $\Gamma$ is not arbitrary while is given in some structural form. Thus in structural cases it had become crucial to evaluate the normal cone in question via the initial data of structural constraint mappings, which largely relates to the existence of {\em strong/full calculus} for generalized normals and subgradients. Such calculus is available for the limiting constructions in Asplund spaces; see \cite{mo06a} and the references therein. In particular, this full calculus makes it possible to evaluate the limiting normal to the graph of $\Gamma$ as in \eqref{mor2-v} and \eqref{euler1} (which relates to the {\em coderivative} \cite{mo06a} of $\Gamma$) and represent it in terms of the problem data in structural problems. We refer the reader to \cite[Section~4.3]{mo06} for calculations of the coderivative for various classes of set-valued mappings $\Gamma$ and to \cite{mnp12,mny09} for their particular applications to KKT systems in nonlinear programming.\vspace*{0.03in}

Finally in this paper, we evaluate the limiting subdifferential of the random optimal value function $v(\cdot,\omega)$ for the stochastic DP problem \eqref{P} entirely via the problem data for the case of the random constraint mapping $\Gamma(x,\omega)$ defined in the nonlinear programming (NLP) form by
\begin{equation}\label{eq4}
\Gamma(x,\omega):=\left\{y\in X\Bigl|\;\begin{array}{ll}\varphi_i(x,y,\omega)\le 0,\;i=1,\ldots,m,\\
\varphi_i(x,y,\omega)=0,\;i=m+1,\ldots,m+r
\end{array}\right\},
\end{equation}
where the functions $\varphi_i:X\times X\times\Omega\to\R$, $i=1,\ldots,m+r$, are such that $\ph_i(\cdot,\cdot,\omega)$ are strictly differentiable at the reference point $(\ox,\oy)$ with their strict derivatives $\nabla\ph_i(\ox,\oy,\omega)$ while $\ph_i(x,y,\cdot)$ are measurable in accordance with assumption (H2) on $\Gamma$. Let us first recall the classical constraint qualification in nonlinear programming. Given $(\ox,\oy)\in X\times X$, denote the collection of {\em active constraint indices} by
$$
I(\bar{x},\bar{y},\omega):=\big\{i\in\{ 1,\ldots,m+r\}\big|\;\varphi_i(\bar{x},\bar{y},\omega)=0\big\}.
$$
It is said that the parametric {\em Mangasarian-Fromovitz constraint qualification} (MFCQ) holds at $(\ox,\oy)$ if for every $\omega\in\Omega$ we have the conditions:\vspace*{0.03in}

{\bf(i)} The equality constraint gradients $\nabla\varphi_{m+1}(\bar{x},\bar{y},\omega),\ldots,\nabla\varphi_{m+r}(\bar{x},\bar{y},\omega)$ are linearly independent in $X\times X$.

{\bf(ii)} There exists $\xi\in X\times X$ such that $\langle\nabla\varphi_i(\bar{x},\bar{y},\omega),\xi\rangle=0$ for $i=m+1,\ldots,m+r$ and
$\langle\nabla\varphi_i(\bar{x},\bar{y},\omega),\xi\rangle<0$ for $i\in\{1,\ldots,m\}\cap I(\bar{x},\bar{y},\omega)$.\vspace*{0.03in}

Denote the objective function $\varphi:X\times X\times\Omega\to\R$ in \eqref{P} by
\begin{eqnarray}\label{obj}
\varphi(x,y,\omega):=u(x,y,\omega)+\beta\disp\int v(y,\omega')P(d\omega'\mid\omega).
\end{eqnarray}
Taking $\lambda:=(\lambda_1,\ldots,\lambda_{m+r})\in\R^{m+r}$ and assuming that $u(\cdot,\cdot,\omega)$ is strictly differentiable at $(\ox,\oy)$ as $\omega\in\O$, consider the {\em set of Lagrange multipliers}
$$
\Lambda(\bar{x},\bar{y},\omega):=\left\{\lambda\in \R^{m+r}\Biggl|\;\begin{array}{ll}
\nabla_y\varphi(\bar{x},\bar{y},\omega)+\disp\sum_{i=1}^{m+r}\lambda_i\nabla_y\varphi_i(\bar{x},\bar{y},\omega)=0,\\
\lambda_i\ge 0,\;\lambda_i\varphi_i(\bar{x},\bar{y},\omega)=0,\;i=1,\ldots,m
\end{array}\right\},
$$
where we get by \eqref{obj} and the classical Leibniz rule that
$$
\nabla_y\varphi(\bar{x},\bar{y},\omega)=\nabla_y u(\bar{x},\bar{y},\omega)+\beta\disp\int\nabla v(\bar{y},\omega')P(d\omega'\mid\omega).
$$
Observe that the set $\Lambda(\bar{x},\bar{y},\omega)$ describes the Lagrangian stationary condition $\nabla_y L(\bar{x},\bar{y},\lambda,\omega)=0$ via the Lagrange function
$$
L(x,y,\lambda,\omega):=\varphi(x,y,\omega)+\sum_{i=1}^{m+r}\lambda_i\varphi_i(x,y,\omega).
$$

The following property is needed for the formulation of the next theorem. We say that a function $f:D\subset X\to X$ is \textit{locally upper Lipschitzian} at $\bar{x}\in D$ if there exist real numbers $\eta>0$ and $\ell\ge 0$ such that
$$
\|f(x)-f(\bar{x})\|\le\ell\|x-\bar{x}\|\quad\text{for every }\;x\in B_\eta(\bar{x})\cap D.
$$
Such a function is said to be a \textit{local upper Lipschitzian selector} of a given multifunction $F:D\rightrightarrows X$ around $(\bar{x},\bar{y})\in\mathrm{gph}\,F$ if $f(\bar{x})=\bar{y}$ and $f(x)\in F(x)$ for every $x$ in some neighborhood of $\bar{x}$.

Now we are ready to present the last result of the paper. Observe that its finite-dimensional version in the inclusion form can be found in \cite{rs09} for problems of type \eqref{P}  with only the inequality constraints in \eqref{eq4} given by smooth functions under the standard interiority condition and the linear independence of the gradients $\nabla_y\varphi_i(\bar{x},\bar{y},\omega)$, $i\in I(\bar{x},\bar{y},\omega)$, which is surely stronger than the parametric MSCQ imposed below.

\begin{Theorem}[\bf limiting subgradients of the value function in stochastic DP with NLP constraint mapping]\label{nlp}
Let $X$ be a separable Asplund space, and let the constraint multifunction $\Gamma:X\times\Omega\rightrightarrows X$ of \eqref{P} is given in the NLP form \eqref{eq4}. Assume that $u(\cdot,\cdot,\omega)$ and $\varphi_i(\cdot,\cdot,\omega)$, $i=1,\ldots,m+r$, are strictly differentiable at $(\bar{x},\bar{y})\in\mathrm{gph}\,G(\cdot,\omega)$, that the policy multifunction $G$ is inner semicontinuous at $(\bar{x},\bar{y})$, and that the parametric MFCQ is satisfied at this point. Then we have the inclusion
$$
\partial v(\bar{x},\omega)\subset\bigcup_{\lambda\in\Lambda(\bar{x},\bar{y},\omega)}\left\{\nabla_x u(\bar{x},\bar{y},\omega)+\sum_{i=1}^{m+r}\lambda_i\nabla_x\varphi_i(\bar{x},\bar{y},\omega)\right\},\quad\omega\in\O.
$$
Furthermore, the equality holds therein provided that $G(\cdot,\omega)$ admits a local upper Lipschitzian selector around the reference point $(\bar{x},\bar{y})$.
\end{Theorem}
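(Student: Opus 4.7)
The plan is to specialize Theorem~\ref{mor2} to the NLP case and then compute the limiting normal cone $N((\bar{x},\bar{y});\mathrm{gph}\,\Gamma(\cdot,\omega))$ through the Mordukhovich coderivative calculus for constraint systems. Before invoking Theorem~\ref{mor2} I would first verify its hypotheses: the parametric MFCQ at $(\bar{x},\bar{y})$ together with strict differentiability of the $\varphi_i$ implies metric regularity of the constraint system by the classical Lyusternik–Graves/Robinson scheme, hence the Aubin/Lipschitz-like property of $\Gamma(\cdot,\omega)$ around $(\bar{x},\bar{y})$; continuity of the $\varphi_i$ gives local closedness of $\gph\,\Gamma(\cdot,\omega)$; and strict differentiability plus MFCQ yields normal regularity of $\gph\,\Gamma_\omega$ at $(\bar{x},\bar{y})$ (via the tangential regularity for smooth-constrained systems under MFCQ, cf.\ \cite[Proposition~1.114]{mo06a}). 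The strict differentiability of $u$ gives $\partial u=\{\nabla u\}$ and its regularity, so the refined form \eqref{refined} of Theorem~\ref{mor2} is applicable.

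Second, I would compute the two normal cones appearing in \eqref{refined} via the standard Mordukhovich formula for limiting normals to smooth constraint sets under MFCQ: applied jointly to the system $\varphi_i(x,y,\omega)\,\{\le,=\}\,0$, it gives
\[
N\big((\bar{x},\bar{y});\gph\,\Gamma(\cdot,\omega)\big)=\Bigl\{\textstyle\sum_{i=1}^{m+r}\lambda_i\big(\nabla_x\varphi_i,\nabla_y\varphi_i\big)(\bar{x},\bar{y},\omega)\,\Big|\,\lambda_i\ge0,\,\lambda_i\varphi_i=0,\,i\le m\Bigr\},
\]
and the products-of-normals identity (using regularity) gives $N(\bar{y};\Gamma(\bar{x},\omega))$ and $N(\bar{x};\Gamma_\omega^{-1}(\bar{y}))$ as the $y$- and $x$-projections of this common cone with the \emph{same} multiplier vector $\lambda$. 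Substituting into the two relations of \eqref{refined}, the $y$-coordinate inclusion becomes exactly the definition of $\Lambda(\bar{x},\bar{y},\omega)$, whereas the $x$-coordinate gives $x^*=\nabla_x u(\bar{x},\bar{y},\omega)+\sum_i\lambda_i\nabla_x\varphi_i(\bar{x},\bar{y},\omega)$, producing the claimed upper estimate for $\partial v(\bar{x},\omega)$. For the equality assertion, the local upper Lipschitzian selector $g$ of $G(\cdot,\omega)$ reduces the Bellman minimization to the unconstrained expression $x\mapsto\varphi(x,g(x),\omega)$; combining this with Corollary~\ref{dif-val} along the trajectory generated by $g$ shows that $v(\cdot,\omega')$ is strictly differentiable at $\bar{y}$ and enables a Mordukhovich-style chain rule which inverts the inclusion, in the spirit of \cite[Theorem~1.110]{mo06a} and the finite-dimensional result of \cite{rs09}.

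The hard part will be reconciling the Gelfand integral of the limiting subdifferential $\partial v(\bar{y},\cdot)$ that emerges from Theorem~\ref{mor2} with the gradient $\int \nabla v(\bar{y},\omega')\,P(d\omega'\mid\omega)$ appearing in the definition of $\Lambda(\bar{x},\bar{y},\omega)$: this requires arguing that under MFCQ and inner semicontinuity of $G$, the value function $v(\cdot,\omega')$ is itself strictly differentiable at $\bar{y}$ for $P(\cdot\mid\omega)$-a.e.\ $\omega'$ (so that the Gelfand integral collapses onto a single gradient selection), which I would establish by recursing Corollary~\ref{dif-val} at the next-period points $(\bar{y},g(\bar{y},\omega'))$ and invoking the local upper viability of $G$; alternatively, one reads $\Lambda$ with $\nabla v$ interpreted as a measurable selection from $\partial v$ and indexes the union accordingly. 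A secondary technical point is the invocation of the Mordukhovich NLP normal-cone formula in an arbitrary separable Asplund space rather than in $\R^n$, which requires the coderivative sum and chain rules of \cite{mo06a} applied to the smooth mapping $(\varphi_1,\ldots,\varphi_{m+r})$ under MFCQ-based surjectivity of its derivative on the active indices.
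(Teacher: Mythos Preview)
Your approach is genuinely different from the paper's, and it runs into real difficulties that the paper's route avoids.

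The paper's proof is a two-line application of an external result: the Bellman equation \eqref{bell} exhibits $v(\cdot,\omega)$ as a marginal function of the form \eqref{mf} with objective $\varphi$ from \eqref{obj} and NLP constraint mapping \eqref{eq4}, and then \cite[Corollary~4]{mny09} is invoked directly. That result is formulated precisely for marginal functions of parametric NLP problems in Asplund spaces under MFCQ and inner semicontinuity of the argmin multifunction, and it already delivers both the inclusion and the equality under an upper Lipschitzian selector. In particular, the paper does \emph{not} route through Theorem~\ref{mor2}, does not use the Leibniz rule of Section~3, and does not need to compute $N((\bar{x},\bar{y});\gph\,\Gamma(\cdot,\omega))$ by hand.

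Your route through Theorem~\ref{mor2} has two concrete problems. First, Theorem~\ref{mor2} carries hypotheses that Theorem~\ref{nlp} does not assume: a complete and \emph{saturated} measure $P(\cdot\mid\omega)$, integrability of the Lipschitz modulus, and local lower viability of $G$. None of these appear in the statement you are trying to prove, so you would be importing assumptions the theorem does not have. Second, the ``hard part'' you flagged is not a technicality but a structural obstruction: the set $\Lambda(\bar{x},\bar{y},\omega)$ is defined via $\nabla_y\varphi(\bar{x},\bar{y},\omega)$, whereas Theorem~\ref{mor2} produces $\int\partial v(\bar{y},\omega')P(d\omega'\mid\omega)$. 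Collapsing the latter to a single gradient requires strict differentiability of $v(\cdot,\omega')$ at $\bar{y}$, which would in turn require the hypotheses of Theorem~\ref{nlp} to hold at the \emph{next-period} points $(\bar{y},g(\bar{y},\omega'))$---but the theorem only assumes them at $(\bar{x},\bar{y})$. Your suggested recursion via Corollary~\ref{dif-val} also needs local upper viability, which again is not assumed here. The paper sidesteps all of this by treating $\varphi$ as the objective of a one-step marginal problem and applying the ready-made NLP sensitivity result from \cite{mny09}, which already packages the normal-cone computation and the equality assertion.
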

\begin{proof} The Bellman equation \eqref{bell} ensures the representation of $v(x,\omega)$ as a marginal function \eqref{mf} under the separability assumption. It remains to use the subdifferential results from \cite[Corollary~4]{mny09}, which work in the general framework of \eqref{mf} in Asplund spaces and ensure the validity of both statements of the theorem under the imposed assumptions.\end{proof}\vspace*{0.05in}
{\bf Acknowledgements.} The authors are gratefully indebted to Mari\'an Fabian, Ali Khan, Lionel Thibault, and anonymous referee for their valuable remarks that allowed us to improve the original presentation.

\small

\end{document}